\newcommand\on{\operatorname}
\newcommand\Ric{\on{Ric}}
\theoremstyle{plain}
\newtheorem{theorem}{Theorem}[section]
\newtheorem{proposition}[theorem]{Proposition}
\newtheorem{corollary}[theorem]{Corollary}
\theoremstyle{definition}
\newtheorem{remark}[theorem]{Remark}
\newtheorem{definition}[theorem]{Definition}
\begin{document}

\title{ Ricci Solitons on the Poincar\'e upper half plane }
\author {{Abdou Bousso$^{1}$}\thanks{{
 E--mail: \texttt{abdoukskbousso@gmail.com} (A. Bousso)}},\texttt{ }Ameth  Ndiaye$^{2}$\footnote{{
 E--mail: \texttt{ameth1.ndiaye@ucad.edu.sn} (A. Ndiaye)}}\\
\begin{small}{$^{1}$D\'epartement de Math\'ematiques et Informatique, FST, Universit\'e Cheikh Anta Diop,  \.Dakar, S\'en\'egal.}\end{small}\\ 
\begin{small}{$^{2}$D\'epartement de Math\'ematiques, FASTEF, Universit\'e Cheikh Anta Diop, Dakar, Senegal.}\end{small}}
\date{}
\maketitle%


\begin{abstract} 
 In this paper, we characterize the Ricci soliton equations on the Poincar\'e upper half plane . First we classify all Ricci soliton and Ricci Bourguignon soliton in the half plane of Poincar\'e and after we generalize those equations in $\mathbb{H}^n$. We obtain some nice properties of the soliton about their geodesic flows.
\end{abstract}
\begin{small} {\textbf{MSC:}  53C20, 53C21.}
\end{small}\\
\begin{small} {\textbf{Keywords:} Ricci soliton, Ricci Bourguignon soliton, Hyperbolic plane, Riemannian metric.} 
\end{small}\\
\maketitle

\section{Introduction}

The Poincaré half-plane model is a method of describing the hyperbolic plane in non-Euclidean geometry by employing points in the well-known Euclidean plane. Each point in the hyperbolic plane is represented by a Euclidean point with coordinates \((x, y)\) whose \(y\) coordinate is greater than zero. The Poincaré upper half-plane is the generalization of the half-plane in dimension \(n \geq 2\).

In this work, we study the Ricci soliton one these manifolds. Einstein metrics play a significant role in the intersection of physics and mathematics. Numerous scholars have concentrated on this unique class of metrics and certain related structures known as Einstein type metrics during the past few decades \cite{Besse}. The idea of Ricci solitons is one of the most well-known of these \cite{Cao, Fernandez, Munteanu, Petersen1, Petersen2, Petersen3}, having been first proposed by Hamilton \cite{Hamilton} who introduced in 1982, the notion of Ricci flow to find a canonical metric on a smooth manifold.
 The Ricci flow is an evolution equation for metrics on a Riemannian manifold defined as follows:
 \begin{eqnarray}\label{flow}
     \frac{\partial}{\partial_t}g_{ij}=-2R_{ij}.
 \end{eqnarray}
 Ricci solitons are special solutions of the Ricci flow equation (\ref{flow}) of the form $g_{ij}=\mu(t)\phi^*_tg_{ij}$ with the initial
 condition $g_{ij}(0)=g_{ij}$, where $\phi_t$ are diffeomorphisms of the manifold and $\mu(t)$ is the scaling function.  A Ricci soliton is a generalization of an Einstein metric. A Ricci soliton is a Riemannian metric $g$ together with a vector field $X$ and $\lambda$ a scalar which satisfies
\begin{equation} \label{eq1}
\Ric+\frac{1}{2}\mathcal{L}_{X}g=\lambda g.
\end{equation}
In \cite{Manolo}, the author discuss some classification results for Ricci solitons, that is, self similar
 solutions of the Ricci Flow.
In detail, they took the equation point of view, trying to avoid the tools provided by
 considering the dynamic properties of the Ricci flow.\\
 The paper is organized as follow: in Section 2, we recall some basic notions which we use in this study. In Section 3, there are two subsections, first we study the Ricci soliton in the Poincar\'e half plane $\mathbb{H}^2$ with some properties of the flow of the soliton. And after that we generalized these result in height dimensions.

\section{Preliminaries}

In this section we recall all basic notions which we use for our main results. 
\subsection{Ricci solitons}
 Let  $(M^n, g)$ be an  $n$-dimensional Riemannian manifold, then we defined on $M^n$ the {\it Ricci-Bourguignon solitons} as a  self-similar solutions to {\it Ricci-Bourguignon flow} \cite{bib13*} defined by:
\begin{equation} \label{eq1}
\frac{\partial }{\partial t}g(t)=-2(\Ric-\rho S g), 
\end{equation}
where $S$ is the scalar curvature of the Riemannian metric $g$, $\Ric$ is the Ricci curvature tensor of the metric,  and $\rho$ is a real constant. 
 When  $\rho=0$ in ~\eqref{eq1}, then  we get a Ricci flow. 
Remark that for some  values of $\rho$ in equation~\eqref{eq1}, $\Ric-\rho Sg$ evolve into  the following situations:  \\

\textbf (1)  If  $\rho=\frac{1}{2},$ then it is an Einstein tensor $\displaystyle\Ric-\frac{S}{2}g$, 

\textbf (2)   If  $\rho=\frac{1}{n},$ then it is a traceless Ricci tensor $\displaystyle\Ric-\frac{S}{n}g$.

\begin{definition}\cite{bib2}
Let  $(M^n, g)$  be a  Riemannian manifold of dimension $n \geq 3$. Then it  is called  Ricci-Bourguignon soliton if
\begin{equation} \label{eq2}
\Ric+\frac{1}{2}\mathcal{L}_{X}g=(\lambda+\rho S)g,
\end{equation}
where  $\mathcal{L}_{X}$ denotes the Lie derivative operator along the vector field $X$ which is called soliton or potential, $\rho$  and  $\lambda$ are real constants and the soliton structure is  denoted by  $(M, g, X, \lambda, \rho)$. 
\end{definition}

If  $\lambda<0,$ $\lambda=0$ or  $\lambda>0$,  then the Ricci-Bourguignon soliton  is called   expanding, steady or  shrinking  respectively.


\begin{definition} \cite{Haj}
Let $(M^n, g)$ be a Riemannian manifold. Then it is called  $G$-Ricci-Bourguignon soliton  if there exist a vector field $X$, a constant $\lambda$ and a smooth function $G\neq 0$  such that 
\begin{equation} \label{eq3h}
\Ric+\frac{G}{2}\mathcal{L}_{X}g=(\lambda+\rho S)g.
\end{equation}
Hence it is  denoted by $(M^n, g, X, G, \lambda, \rho)$.
\end{definition}
Therefore, if $X=\nabla F$, we get a gradient $G$-Ricci-Bourguignon soliton.  Hence,  equation \eqref{eq3h} is rewritten as
\begin{equation} \label{eq4h}
\Ric+G\nabla^2 F=(\lambda+\rho S)g,
\end{equation}
where $\nabla^2 F$ is the Hessian of $F$.
Thus for $\rho=\frac{1}{2}$, we have a  gradient $G$-Einstein soliton  and for $\rho=\frac{1}{n}$, we get a gradient  $G$-almost traceless Ricci soliton. 
\begin{remark}\label{R0}
Let \(X\in\mathfrak{X}(\mathbb{H}^n)\) be a vector field. The triple \((\mathbb{H}^n,ds^2,X)\) is a Ricci soliton iff it is Ricci Bourguignon soliton. Indeed we have
\begin{itemize}
                \item Suppose that \((\mathbb{H}^n,ds^2,X,\lambda)\) is a Ricci soliton. You just have to choose $\lambda=\lambda'-n(n-1)\rho$ in the equation \eqref{eq3h} and we get the equation \eqref{eq4h}.
                \item Suppose that\((\mathbb{H}^n,ds^2,X,\lambda,\rho)\) is a Ricci  Bourguignon soliton. Just replace \(\lambda\) in the equation \eqref{eq4h}  by \(\lambda'+n(n-1)\rho\)  for getting the equation \eqref{eq3h}.
            \end{itemize}
\end{remark}
\begin{remark}
    Let \(X\in\mathfrak{X}(\mathbb{H}^2)\) be a vector field. If \((\mathbb{H}^2,ds^2,X,\lambda,\rho)\) is a Ricci Bourguignon soliton  or  \((\mathbb{H}^2,ds^2,X,\lambda)\) is Ricci soliton then \( X\) is conform. Indeed \(\operatorname{Ric}\) is proportional to \(ds^2\).
\end{remark}

\subsection{Geometric of the Poincar\'e upper half plane}
The Poincar\'e upper half plane is defining by

\begin{eqnarray}\label{Hn}
\mathbb{H}^n = \{(x_1, x_2, \dots, x_{n-1}, x_n) \in \mathbb{R}^n \mid x_n > 0\},
\end{eqnarray}
where \(x_n > 0\) is the coordinate associated with the "hyperbolic height". The associate metric of $\mathbb{H}^n$ is given by:
\begin{eqnarray}
ds^2 = \frac{dx_1^2 + dx_2^2 + \cdots + dx_{n-1}^2 + dx_n^2}{x_n^2}.
\end{eqnarray}
In tensor terms, the metric can be expressed as:
$g_{ij} = \frac{\delta_{ij}}{x_n^2}$,
where \(\delta_{ij}\) is the Kronecker symbol.\\
The geometry of the Poincaré upper half-plane is characterized by a constant negative curvature.
The following proposition allows us to get the Riici curvature of $\mathbb{H}^n$.
\begin{proposition}\label{Arxiv}\cite{Oneil}
    A Riemannian manifold $(M^n, g)$ has constant curvature $k$ if and
 only if $Ric = (n-1)kg$.
\end{proposition}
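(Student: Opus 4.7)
The plan is to establish both implications via a trace computation on the Riemann curvature tensor.

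For the forward direction, I would start from the characterization of constant sectional curvature $k$ by the explicit form of the Riemann tensor,
\[
R(X,Y)Z = k\bigl(g(Y,Z)X - g(X,Z)Y\bigr).
\]
Fixing a point and an orthonormal frame $\{e_1, \dots, e_n\}$, I would compute the Ricci tensor as the trace
\[
\Ric(Y, Z) = \sum_{i=1}^n g\bigl(R(e_i, Y)Z,\, e_i\bigr),
\]
substitute the above expression for $R$, and simplify. The first piece contributes $k\, g(Y,Z)\sum_i g(e_i,e_i) = nk\, g(Y,Z)$ and the second contributes $-k \sum_i g(e_i,Z)\, g(Y,e_i) = -k\, g(Y,Z)$, so the sum is $(n-1)k\, g(Y,Z)$, which is the desired identity.

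For the converse, assuming $\Ric = (n-1)k\, g$, I would try to reconstruct the form of the Riemann tensor itself. In dimension $2$ there is a single independent sectional curvature per point, equal to $\tfrac{1}{2}\scal$; the hypothesis forces $\scal = 2k$ pointwise, hence constant sectional curvature $k$. In dimension $3$ the Weyl tensor vanishes identically, so the standard Ricci decomposition of $R$ shows the Riemann tensor is entirely determined by $\Ric$, and the Einstein condition $\Ric = 2k\, g$ translates into the required expression $R(X,Y)Z = k\bigl(g(Y,Z)X - g(X,Z)Y\bigr)$.

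The main obstacle is that in dimension $n \geq 4$ the Einstein condition $\Ric = (n-1)k\, g$ alone does not imply constant sectional curvature; one also needs vanishing of the Weyl tensor (and indeed the statement as written is, strictly speaking, a characterization up to this extra input, which O'Neill supplies in his reference through the structure of two-point homogeneous space forms). For the use made in this paper, only the forward implication is invoked, to extract $\Ric = -(n-1)\, g$ on $\mathbb{H}^n$ from its known sectional curvature $k = -1$, so the subtlety in the converse does not obstruct the subsequent computations.
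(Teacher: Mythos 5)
The paper offers no proof of this proposition at all: it is imported verbatim from O'Neill's book, and only the forward implication is ever used (to pass from sectional curvature $-1$ on $\mathbb{H}^n$ to $\Ric=-(n-1)g$ in equation~\eqref{Ric}). Your trace computation for that direction is correct: with $R(X,Y)Z=k\bigl(g(Y,Z)X-g(X,Z)Y\bigr)$, contracting against an orthonormal frame gives $k\bigl(n\,g(Y,Z)-g(Y,Z)\bigr)=(n-1)k\,g(Y,Z)$, which is exactly what the paper needs. You are also right that the converse, as literally stated, fails for $n\ge 4$: the Einstein condition $\Ric=(n-1)k\,g$ does not force constant sectional curvature in general (the product of two unit round $2$-spheres is Einstein with $\Ric=g$, yet mixed tangent planes have zero sectional curvature while planes tangent to a factor have curvature $1$), and your dimension~$2$ and~$3$ arguments --- where the full curvature tensor is determined by $\Ric$, in dimension~$3$ via the vanishing of the Weyl tensor --- cover precisely the cases in which the equivalence holds without an extra Weyl-flatness hypothesis. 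In short, your proposal proves everything that is provable and everything the paper actually relies on; the residual defect lies in the ``if and only if'' phrasing of the statement itself rather than in your argument, and it would be cleaner for the paper to state only the implication from constant curvature to the Einstein condition.
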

Since the curvature of the hyperbolic space $(\mathbb{H}^n, ds^2)$ is equal to $-1$, we have, by using the Proposition \ref{Arxiv}, that
 the  Ricci curvature is given by:
\begin{eqnarray}\label{Ric}
     Ric_{ij} = -(n-1)g_{ij},
\end{eqnarray}
where  \(g_{ij}\) is the metric tensor, 
and the scalar curvature \(S\) is given by:
\begin{eqnarray}\label{scal}
    S=Tr(Ric_{ij}) = -n(n-1).
\end{eqnarray}
Now we recall the formula of the Lie derivative.
\begin{definition}
    Let \((M^n,g)\) be a Riemannian manifold and let \(\mathfrak{X}(M)\) be the space of the smooth vector fields. In local coordinates, the Lie derivative of a metric $g$ in the direction of a vector field \(X=\sum\limits_{k=1}^nX_k(x_1,\cdots,x_n)\partial x_k\) is given by: \begin{equation}\label{f1}
        \left(\mathcal{L}_Xg\right)_{ij}=\sum_{k=1}^n\left(X_k(x_1,\cdots,x_n)\partial_kg_{ij}+g_{kj}\partial_iX_k(x_1,\cdots,x_n)+g_{ik}\partial_jX_k(x_1,\cdots,x_n)\right).
        \end{equation}
\end{definition}
Let \(X_{(x_1,\cdots,x_n)}=\sum\limits_{k=1}^nX_k(x_1,\cdots,x_n)\partial x_k\in \mathcal{X}(\mathbb{H}^n)\) be a vector field of $\mathbb{H}^n$. By (\ref{f1}), we have 
   \begin{equation}\label{f2} \begin{cases}
        \left(\mathcal{L}_{X(x_1,\cdots,x_n)}ds^2\right)_{ii}=\frac{2}{x_n^2}\left(\partial_iX_i(x_1,\cdots,x_n)-\frac{X_n(x_1,\cdots,x_n)}{x_n}\right)\\
\left(\mathcal{L}_{X(x_1,\cdots,x_n)}ds^2\right)_{ij}=\left(\mathcal{L}_{X(x_1,\cdots,x_n)}ds^2\right)_{ji}=\frac{\partial_jX_i(x_1,\cdots,x_n)+\partial_iX_j(x_1,\cdots,x_n)}{x_n^2}.
    \end{cases}
\end{equation}

Let \( F : \mathbb{H}^n \to \mathbb{R} \) be a smooth function. We want to calculate the components of the Hessian tensor \( \nabla^2(F) \) in the coordinates \( (x_1, \dots, x_n) \).

Let's remember the local formula:
\begin{eqnarray}\label{Hessian}
\nabla^2(F)\left(\frac{\partial}{\partial x^i}, \frac{\partial}{\partial x^j}\right) = \frac{\partial^2 F}{\partial x^i \partial x^j} - \Gamma^k_{ij} \frac{\partial F}{\partial x^k}, 
\end{eqnarray}
where the Christoffel symbols $\Gamma^k_{ij}$ are given the formula
\begin{eqnarray}\label{gamma}
\Gamma^k_{ij} = \frac{1}{2} g^{kl} \left( \frac{\partial g_{jl}}{\partial x^i} + \frac{\partial g_{il}}{\partial x^j} - \frac{\partial g_{ij}}{\partial x^l} \right).
\end{eqnarray}


The useful partial derivatives are:
\begin{itemize}
    \item if \( i \neq n \), \( \frac{\partial g_{ij}}{\partial x^n} = -\frac{2\delta_{ij}}{x_n^3} \)
    \item \( \frac{\partial g_{nn}}{\partial x^n} = -\frac{2}{x_n^3} \).
\end{itemize}
And then, the non-zero Christoffel symbols are:
\begin{align}\label{chris}
\Gamma^n_{ij} &= \frac{\delta_{ij}}{x_n}, \quad \text{si } i, j < n \nonumber\\
\Gamma^k_{nj} &= -\frac{\delta_{jk}}{x_n}, \quad \text{si } j < n\nonumber \\
\Gamma^n_{nn} &= -\frac{1}{x_n}.
\end{align}
Using the equations \eqref{Hessian}, \eqref{gamma} and \eqref{chris} we have \\
\textbf{Case 1 :} if \( i, j < n \), we have 
\[
\nabla^2(F)\left(\frac{\partial}{\partial x_i}, \frac{\partial}{\partial x_j}\right) 
= \frac{\partial^2 F}{\partial x_i \partial x_j} - \Gamma^n_{ij} \frac{\partial F}{\partial x_n}
= \frac{\partial^2 F}{\partial x_i \partial x_j} - \frac{\delta_{ij}}{x_n} \frac{\partial F}{\partial x_n}
\]
\textbf{Case 2 :} if  \( i = n \), \( j < n \) (and symmetric), then we have 
\[
\nabla^2(F)\left(\frac{\partial}{\partial x_n}, \frac{\partial}{\partial x_j}\right)
= \frac{\partial^2 F}{\partial x_n \partial x_j} - \sum_{k} \Gamma^k_{nj} \frac{\partial F}{\partial x_k}
= \frac{\partial^2 F}{\partial x_n \partial x_j} + \frac{1}{x_n} \frac{\partial F}{\partial x_j}
\]
\textbf{Case 3 :} if \( i = j = n \), then we have
\[
\nabla^2(F)\left(\frac{\partial}{\partial x_n}, \frac{\partial}{\partial x_n}\right)
= \frac{\partial^2 F}{\partial (x_n)^2} - \Gamma^n_{nn} \frac{\partial F}{\partial x_n}
= \frac{\partial^2 F}{\partial (x_n)^2} + \frac{1}{x_n} \frac{\partial F}{\partial x_n}.
\]
The Hessian matrix, denoted by $\nabla^2(F)$, is an $n\times n$ matrix whose elements are given by
$\nabla^2(F)_{ij} = \nabla^2(F)\left(\frac{\partial}{\partial x^i}, \frac{\partial}{\partial x^j}\right)$.
Using the three cases above we get

$$
\nabla^2(F) = \begin{pmatrix}
\frac{\partial^2 F}{\partial x_1^2} - \frac{1}{x_n} \frac{\partial F}{\partial x_n} & \frac{\partial^2 F}{\partial x_1 \partial x_2} & \cdots & \frac{\partial^2 F}{\partial x_1 \partial x_{n-1}} & \frac{\partial^2 F}{\partial x_1 \partial x_n} + \frac{1}{x_n} \frac{\partial F}{\partial x_1} \\
\frac{\partial^2 F}{\partial x_2 \partial x_1} & \frac{\partial^2 F}{\partial x_2^2} - \frac{1}{x_n} \frac{\partial F}{\partial x_n} & \cdots & \frac{\partial^2 F}{\partial x_2 \partial x_{n-1}} & \frac{\partial^2 F}{\partial x_2 \partial x_n} + \frac{1}{x_n} \frac{\partial F}{\partial x_2} \\
\vdots & \vdots & \ddots & \vdots & \vdots \\
\frac{\partial^2 F}{\partial x_{n-1} \partial x_1} & \frac{\partial^2 F}{\partial x_{n-1} \partial x_2} & \cdots & \frac{\partial^2 F}{\partial x_{n-1}^2} - \frac{1}{x_n} \frac{\partial F}{\partial x_n} & \frac{\partial^2 F}{\partial x_{n-1} \partial x_n} + \frac{1}{x_n} \frac{\partial F}{\partial x_{n-1}} \\
\frac{\partial^2 F}{\partial x_n \partial x_1} + \frac{1}{x_n} \frac{\partial F}{\partial x_1} & \frac{\partial^2 F}{\partial x_n \partial x_2} + \frac{1}{x_n} \frac{\partial F}{\partial x_2} & \cdots & \frac{\partial^2 F}{\partial x_n \partial x_{n-1}} + \frac{1}{x_n} \frac{\partial F}{\partial x_{n-1}} & \frac{\partial^2 F}{\partial x_n^2} + \frac{1}{x_n} \frac{\partial F}{\partial x_n}
\end{pmatrix}.
$$

\section{Main results}
\subsection{Poincar\'e half plane of dimension 2}
 Let \(X\) be a smooth vector field on $\mathbb{H}^2$. We have the first following result.
\begin{theorem}\label{T3}
    The manifold \((\mathbb{H}^2,ds^2,X, \lambda, \rho)\) is a \(G\)-Ricci Bourguignon soliton iff \begin{align*}
        X_ {(x,y)}&=\left( \int f(x)dx+(\lambda-2\rho+1)\left(\int\left(\int\frac{dy}{yG(x,y)}\right)dx+\int\frac{dx}{G(x,y)}\right)+h(y)\right)\partial x\\
        &+\left(f(x)y+(\lambda-2\rho+1)y\int\frac{dy}{yG(x,y)}\right)\partial y,
    \end{align*}  and \[h'(y)+f'(x)y=(-\lambda+2\rho-1)\left(\frac{\partial \left(y\int \frac{dy}{yG(x,y)}\right)}{\partial x}+\frac{\partial\left(\int\left(\int \frac{dy}{yG(x,y)}\right) \right)dx }{\partial y}+\frac{\partial \left(\int \frac{dx}{G(x,y)}\right)}{\partial y}\right)\] where \(f\in\mathcal{C}^\infty(\mathbb{R})\) and \(h\in\mathcal{C}^\infty(\mathbb{R}_*^+)\). And moreover the components of \(X\) are harmonic.
\end{theorem}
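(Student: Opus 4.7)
The plan is to expand the $G$-Ricci Bourguignon equation in local coordinates, extract scalar PDEs for the two components of $X$, solve them by direct integration, and turn the one remaining equation into the compatibility relation on $f$ and $h$ stated in the theorem. Because the argument produces the general solution of the system, both directions of the ``iff'' fall out of the same computation.

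First I would substitute $\Ric = -g$ (which follows from Proposition \ref{Arxiv} with $n=2$ and $k=-1$) and $S = -2$ into \eqref{eq3h}, reducing the soliton equation to $\frac{G}{2}\mathcal{L}_X g = (\lambda - 2\rho + 1)g$. Writing $X = X_1(x,y)\partial_x + X_2(x,y)\partial_y$ and applying \eqref{f2}, the tensor equation splits into three scalar PDEs: the two diagonal ones
\[
G\!\left(\partial_x X_1 - \tfrac{X_2}{y}\right) = \lambda - 2\rho + 1, \qquad G\!\left(\partial_y X_2 - \tfrac{X_2}{y}\right) = \lambda - 2\rho + 1,
\]
together with the off-diagonal one $\partial_y X_1 + \partial_x X_2 = 0$. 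Set $c := \lambda - 2\rho + 1$ for brevity.

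Next I would treat the second diagonal equation as a first-order linear ODE in $y$ for $X_2$ with $x$ held as a parameter. Multiplying by the integrating factor $1/y$ turns it into $\partial_y(X_2/y) = c/(yG(x,y))$, whose integration yields $X_2 = c\,y\int\frac{dy}{yG(x,y)} + f(x)y$ with $f \in \mathcal{C}^{\infty}(\mathbb{R})$ arbitrary, matching the form claimed for $X_2$. Substituting this expression into the first diagonal equation gives $\partial_x X_1 = c/G + X_2/y$, and integrating once in $x$ produces the claimed expression for $X_1$ up to an arbitrary smooth function $h(y)$ on $\mathbb{R}_*^+$.

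Finally I would impose the remaining off-diagonal equation on this two-parameter family. Differentiating the explicit formulas for $X_1$ and $X_2$ and isolating the pure terms $h'(y)$ and $f'(x)y$ from the integrals produces exactly the scalar compatibility identity displayed in the theorem, with the sign supplied by $-c = -\lambda + 2\rho - 1$. The harmonicity of the components then falls out immediately: subtracting the two diagonal equations gives $\partial_x X_1 = \partial_y X_2$, and combined with $\partial_y X_1 = -\partial_x X_2$ one recognizes the Cauchy--Riemann equations for $X_1 + iX_2$, so both $X_1$ and $X_2$ are harmonic. I expect the main technical obstacle to be purely bookkeeping---keeping the nested primitives and partial derivatives aligned when verifying the compatibility identity, since several of the iterated antiderivatives share the same integrand differentiated in different orders.
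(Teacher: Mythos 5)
Your proposal is correct and follows essentially the same route as the paper: reduce \eqref{eq3h} via $\Ric=-g$, $S=-2$ to the three scalar PDEs, solve the $y$-equation for the second component (your integrating factor $1/y$ is just a repackaging of the paper's homogeneous-plus-particular solution), integrate the $x$-equation to get the first component up to $h(y)$, read off the compatibility condition from the off-diagonal equation, and deduce harmonicity from the Cauchy--Riemann relations $\partial_x X_1=\partial_y X_2$, $\partial_y X_1=-\partial_x X_2$. No substantive difference from the paper's argument.
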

\begin{proof}
Consider the vector field in $\mathbb{H}^2$ given by
\(X_{(x,y)}=M(x,y)\partial x+N(x,y)\partial y \) such that the triple \((\mathbb{H}^2,ds^2,X)\) be a $G$-Ricci Bourguignon Soliton.
By the equations \eqref{f2} and \eqref{eq3h},  we get:
 \[\begin{cases}
    G(x,y)\left(\frac{M_x(x,y)}{y^2}-\frac{N(x,y)}{y^3}\right)=\frac{\lambda-2\rho+1}{y^2}\quad (L_1)\\
    M_y(x,y)+N_x(x,y)=0 \quad\quad\quad\quad\quad\quad (L_2)\\
    G(x,y)\left(\frac{N_y(x,y)}{y^2}-\frac{N(x,y)}{y^2}\right)=\frac{\lambda-2\rho+1}{y^2}.\quad \,\,(L_3)
    
\end{cases}\]
The line \((L_3)\) of the system gives \(N_y(x,y)-\frac{1}{y}N(x,y)=\frac{\lambda-2\rho+1}{G(x,y)}\).
Let us first solve the homogeneous differential equation, i.e. the equation: \[N_y(x,y)-\frac{N(x,y)}{y}=0\Rightarrow \frac{N_y(x,y)}{N(x,y)}=\frac{1}{y}\Rightarrow \ln\left(|N(x,y)|\right)=\ln(y)+a(x)\Rightarrow |N(x,y)|=\exp(a(x))y.\] 
The homogeneous solution is \(N_0(x,y)=f(x)y\) where \(|f(x)|=\exp(a(x))\).
Suppose that \(N_1(x,y)=k(x,y)y\) is a particular solution then \[(N_1)_y(x,y)=k_y(x,y)y+k(x,y)\] 
Which is to say that \[k_y(x,y)y=\frac{\lambda-2\rho+1}{G(x,y)}\Rightarrow k(x,y)=(\lambda-2\rho+1)\int\frac{1}{yG(x,y)}dy \]
\[N(x,y)=f(x)y+(\lambda-2\rho+1)y\int\frac{dy}{yG(x,y)}.\]
\((L_1)\Rightarrow M_x(x,y)-f(x)-(\lambda-2\rho+1)\int\displaystyle \frac{dy}{yG(x,y)}=(\lambda-2\rho+1)\frac{1}{G(x,y)}\)
\[M(x,y)=\int f(x)dx+(\lambda-2\rho+1)\left(\int\left(\int\frac{dy}{yG(x,y)}\right)dx+\int \frac{dx}{G(x,y)}\right)+h(y)\]
\((L_2)\) imply that  \[h'(y)+f'(x)y=(-\lambda+2\rho -1)\left(\frac{\partial \left(y\int \frac{dy}{yG(x,y)}\right)}{\partial x}+\frac{\partial\left(\int\left(\int \frac{dy}{yG(x,y)} \right)dx\right)}{\partial y}+\frac{\partial \left(\int\frac{dx}{G(x,y)}\right)}{\partial y}\right).\] In addition, the lines $(L_1)$ and $(L_3)$ directly imply that \(M_x(x,y)=N_y(x,y)\) and  \((L_2)\) gives \( M_y(x,y)=-N_x(x,y)\). So we have \[\begin{cases}
                  M_{xx}(x,y)=N_{xy}(x,y)\\
                  M_{xy}(x,y)=N_{yy}(x,y)\\
                  M_{yy}(x,y)=-N_{xy}(x,y)\\
                   M_{xy}(x,y)=-N_{xx}(x,y)
              \end{cases}\Rightarrow\begin{cases}
                   M_{xx}(x,y)+ M_{yy}(x,y)=0\\
                   N_{xx}(x,y)+N_{yy}(x,y)=0
              \end{cases}\Rightarrow \Delta M(x,y)=\Delta N(xx,y)=0.\] In the end, $M$ and $N$ are harmonic.
\end{proof}
If the vector field is gradient of a function we have the following theorem
\begin{theorem}\label{T4}
Let \( X \in\mathfrak{X}(\mathbb{H}^2) \) be a smooth vector field. If there exists a function
\( F \in \mathcal{C}^\infty(\mathbb{H}^2) \) such that 
$
X = \nabla F
$
and  \( (\mathbb{H}^2, ds^2, \nabla F,\lambda,\rho) \) be a   gradient-Ricci-Bourguignon soliton, then we have 
\[
F(x,y) =\frac{\frac{a}{2}\left(x^2+y^2\right)+a_1x +b}{y}+c,
\] where \(a,b,a_1,c\in\mathbb{R}\).
Moreover if the polynome \(P(x,y)=\frac{a}{2}\left(x^2+y^2\right)+a_1x +b\) does not admit any zero in \(\mathbb{H}^2\) then 
\[
 G(x, y) =  \frac{(\lambda-2\rho +1)y}{\frac{a}{2}\left(x^2+y^2\right)+a_1x +b}.
    \].
\end{theorem}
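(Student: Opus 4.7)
The plan is to reduce the gradient Ricci--Bourguignon equation on $\mathbb{H}^{2}$ to a small system of PDEs for $F$, to decouple that system by a separation-of-variables argument, and then to read off $G$ by substituting the explicit form of $F$. Since $n=2$, Proposition~\ref{Arxiv} and \eqref{scal} give $\operatorname{Ric}=-g$ and $S=-2$, so \eqref{eq4h} becomes $G\,\nabla^{2}F=(\lambda-2\rho+1)g$. Writing $(x_{1},x_{2})=(x,y)$ and reading the entries of the Hessian matrix displayed at the end of Section~2 in the case $n=2$, this splits into
\begin{align*}
G\bigl(F_{xx}-y^{-1}F_{y}\bigr) &= (\lambda-2\rho+1)/y^{2},\\
G\bigl(F_{xy}+y^{-1}F_{x}\bigr) &= 0,\\
G\bigl(F_{yy}+y^{-1}F_{y}\bigr) &= (\lambda-2\rho+1)/y^{2}.
\end{align*}

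\textbf{Ansatz for $F$.} Because $G\not\equiv 0$ by hypothesis, the middle equation reads $(yF_{x})_{y}=0$, so $F_{x}=\alpha(x)/y$ for some $\alpha\in\mathcal{C}^{\infty}(\mathbb{R})$. Integrating in $x$ produces the separated form $F(x,y)=A(x)/y+\phi(y)$ with $A'=\alpha$. Subtracting the first PDE from the third eliminates $G$ and leaves the identity $F_{xx}-F_{yy}=2y^{-1}F_{y}$. Plugging in the ansatz and simplifying collapses this to
\[
A''(x)=y\,\phi''(y)+2\phi'(y),
\]
whose two sides depend on disjoint variables, hence both equal a common constant $k\in\mathbb{R}$.

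\textbf{Solving the ODEs.} The $x$-equation integrates to $A(x)=\tfrac{k}{2}x^{2}+a_{1}x+b$. The $y$-equation can be rewritten as the exact derivative $(y^{2}\phi'(y))'=k\,y$, giving $\phi'(y)=k/2+C/y^{2}$ and then $\phi(y)=(k/2)y-C/y+c$. The $-C/y$ contribution is absorbed into $A(x)/y$ by redefining $b$; renaming $k=a$ yields
\[
F(x,y)=\frac{\tfrac{a}{2}(x^{2}+y^{2})+a_{1}x+b}{y}+c,
\]
as asserted. Setting $P(x,y)=\tfrac{a}{2}(x^{2}+y^{2})+a_{1}x+b$, a short differentiation shows that $F_{yy}+y^{-1}F_{y}=P(x,y)/y^{3}$, so substituting into the third PDE and isolating $G$ produces $G(x,y)=(\lambda-2\rho+1)\,y/P(x,y)$, which is well defined precisely when $P$ has no zero in $\mathbb{H}^{2}$.

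\textbf{Main obstacle.} The only nonroutine step is the separation-of-variables argument: extracting the decoupled identity $A''(x)=y\phi''(y)+2\phi'(y)$ from the two diagonal equations, and recognizing the integrating-factor form $(y^{2}\phi'(y))'=ky$ for the $y$-equation. Everything else is direct substitution into the Hessian formulas already derived in Section~2.
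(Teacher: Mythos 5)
Your proposal is correct and follows essentially the same route as the paper: reduce \eqref{eq4h} to the Hessian system, use the off-diagonal equation to get $F=A(x)/y+\phi(y)$, compare the two diagonal equations to obtain the separated identity $A''(x)=y\phi''(y)+2\phi'(y)$, solve the resulting ODEs, and read off $G$ from a diagonal equation. The only (cosmetic) differences are that you solve the $y$-ODE via the exact form $(y^{2}\phi'(y))'=ky$ rather than the paper's homogeneous-plus-particular computation, and you justify the separation constant explicitly, which the paper merely asserts.
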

\begin{proof}
By the equation \eqref{eq4h}, we have the following system

\[
\left\{
\begin{aligned}
G(x,y) \left(\frac{\partial^2 F}{\partial x^2}-\frac{1}{y}\frac{\partial F}{\partial y}\right) &= \frac{\lambda - 2\rho + 1}{y^2} \quad \text{($L_1$)} \\
\left( \frac{\partial^2 F}{\partial x \partial y}+\frac{1}{y}\frac{\partial F}{\partial x}\right) &= 0 \quad \text{($L_2$)} \\
\left( \frac{\partial^2 F}{\partial y \partial x}+\frac{1}{y}\frac{\partial F}{\partial x}\right)  &= 0 \quad \text{($L_3$)} \\
G(x,y) \left(\frac{\partial^2 F}{\partial y^2}+\frac{1}{y}\frac{\partial F}{\partial y}\right) &= \frac{\lambda - 2\rho + 1}{y^2} \quad \text{($L_4$)}
\end{aligned}
\right.
\] If we multiply the line  \((L_2)\) by $y$ we get 

\[
y \frac{\partial^2 F}{\partial x \partial y} + \frac{\partial F}{\partial x} = 0
\Rightarrow \frac{\partial}{\partial y} \left( y \frac{\partial F}{\partial x} \right) = 0
\]
By integration we obtain
\[
y \frac{\partial F}{\partial x} = A(x) \Rightarrow \frac{\partial F}{\partial x} = \frac{A(x)}{y}
\]
and then we have
\[
F(x, y) = \frac{C(x)}{y} + B(y).
\]
So we have 
\begin{align*}
\frac{\partial F}{\partial x} &= \frac{C'(x)}{y}, &
\frac{\partial^2 F}{\partial x^2} &= \frac{C''(x)}{y} \\
\frac{\partial F}{\partial y} &= -\frac{C(x)}{y^2} + B'(y), &
\frac{\partial^2 F}{\partial y^2} &= \frac{2C(x)}{y^3} + B''(y)
\end{align*}
The equation ($L_1$) becomes  :
\[
G(x, y) \left( \frac{C''(x)}{y} + \frac{C(x)}{y^3} - \frac{B'(y)}{y} \right)
= \frac{\lambda - 2\rho + 1}{y^2}
\]
\begin{equation}\label{cd1}
    \Rightarrow G(x, y) \left( y^2 C''(x) + C(x) - y^2 B'(y) \right) = y(\lambda - 2\rho + 1) .
\end{equation}
If we make a substitution in the line ($L_4$), we get
\[
G(x, y) \left( \frac{C(x)}{y^3} + B''(y) + \frac{B'(y)}{y} \right)
= \frac{\lambda - 2\rho + 1}{y^2}.
\]
\begin{equation}\label{cd2}
    \Rightarrow G(x, y) \left( C(x) + y^3 B''(y) + y^2 B'(y) \right)
= y(\lambda - 2\rho + 1).
\end{equation}
If we compare the equations \eqref{cd1} and \eqref{cd2}, we obtain
\[
y^2 C''(x) + C(x) - y^2 B'(y) = C(x) + y^3 B''(y) - y^2 B'(y)
\]
that is 
\begin{eqnarray}\label{comp}
    y^2 C''(x)  = y^3 B''(y)+2y^2B'(y).
\end{eqnarray}
The equation \eqref{comp} is true if \(C''(x)=a\), where $a$ is a constant. So the equation \eqref{comp} becomes  \(ay^2=y^3B''(y)+2y^2B'(y)\) and we have  \(yB''(y)+2B'(y)=a\). We have \(B_0'(y)=\frac{e_1}{y^2}\) is a particular solution ($e_1$ is a constant). Suppose that \(B_2'(y)=\frac{k(y)}{y^2}\) directly involving from \(\frac{k'(y)}{y}=a\), then we get \(k(y)=\frac{a}{2}y^2+e_2\). So we have  \[B'(y)=\frac{e_1+e_2}{y^2}+\frac{a}{2}\Rightarrow B(y)=\frac{a}{2}y-\frac{e_1+e_2}{y}+c\] \[F(x,y)=\frac{\frac{a}{2}x^2+a_1x+a_2}{y}+\frac{a}{2}y-\frac{e_1+e_2}{y},\]
where $a_1, a_2, e_2$ are some constants.\\
Finally we have \[F(x,y)=\frac{\frac{a}{2}x^2+a_1x}{y}+\frac{a}{2}y+\frac{b}{y}+c\] where \(b=a_2-e_1-e_2\). 
The equation \eqref{cd1} gives 
\[G(x,y)\left(\frac{a}{2}x^2+a_1x+b+\frac{a}{2}y^2\right)=(\lambda-2\rho+1)y.\]
We have 
\begin{enumerate}
    \item  \(\lambda=2\rho-1\) if and only if \(\frac{a}{2}x^2+a_1x-b+\frac{a}{2}y^2=0\) so we get \(F(x,y)=c\) and \(G\) is any function.
    \item  \(\lambda\neq 2\rho-1\) if and only if \(\frac{a}{2}x^2+a_1x+b+\frac{a}{2}y^2\ne0\) for all \((x,y)\in\mathbb{H}^2\) so we have \[G(x,y)=\frac{(\lambda-2\rho+1)y}{\frac{a}{2}x^2+a_1x+b+\frac{a}{2}y^2}.\]
\end{enumerate}
\end{proof}
In the Theorem \ref{T4}, if we choose a special function $G$, we get the following proposition
Using the theorems above we get easily the following corollaries.
\begin{corollary}\label{T5}
    Let \(X\) be a non constant smooth vector field in $\mathbb{H}^2$.  If  \((\mathbb{H}^2,ds^2,X,\lambda,\rho)\) is a Ricci-Bourguignon soliton then \(X\) a Killing vector field. Moreover \begin{equation}
        X_{(x,y)}=\left(\frac{a}{2}\left(x^2-y^2\right)+bx+c\right)\partial x+\left(axy+by\right)\partial y.
    \end{equation}
\end{corollary}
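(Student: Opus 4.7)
The plan is to specialize Theorem~\ref{T3} to the case $G\equiv 1$, which reduces the $G$-Ricci--Bourguignon equation to the ordinary Ricci--Bourguignon equation, and then to use the harmonicity of the components of $X$ established in that theorem to collapse the resulting parameter family onto the Killing fields of $\mathbb{H}^2$.

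First I would simply substitute $G\equiv 1$ into the formulas of Theorem~\ref{T3}. The integrals become elementary: $\int dy/(yG)=\ln y$, $\int dx/G=x$, and $\int\!\bigl(\int dy/(yG)\bigr)dx=x\ln y$. Setting $\mu:=\lambda-2\rho+1$, the general solution therefore takes the form
\[
N(x,y)=f(x)\,y+\mu\,y\ln y,\qquad M(x,y)=\int f(x)\,dx+\mu(x\ln y+x)+h(y).
\]

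Next I would invoke the harmonicity conclusion of Theorem~\ref{T3}: the Cauchy--Riemann-type relations $M_x=N_y$ and $M_y=-N_x$ established there (which do not depend on $G$) give $\Delta M=\Delta N=0$ for the Euclidean Laplacian. A direct computation on $N$ yields $\Delta N=f''(x)\,y+\mu/y$, and requiring this to vanish identically on $\mathbb{R}\times(0,\infty)$ forces $\mu=0$ and $f''(x)=0$. The first identity reads $\lambda=2\rho-1$, so the Ricci--Bourguignon equation reduces to $\mathcal{L}_X g=0$, proving that $X$ is Killing; the second gives $f(x)=ax+b$ for constants $a,b\in\mathbb{R}$, and hence $N(x,y)=axy+by$. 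Feeding $\mu=0$ back into lines $(L_1)$ and $(L_2)$ of the proof of Theorem~\ref{T3} produces $M_x=N/y=ax+b$ and $M_y=-N_x=-ay$; integrating each expression and matching the two antiderivatives gives $M(x,y)=\tfrac{a}{2}(x^2-y^2)+bx+c$, which is the announced formula.

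The only conceptually nontrivial step is recognizing that harmonicity of $N$ kills the logarithmic term. Without that observation one would be left with an infinite-dimensional family of formal ``conformal-dilation'' solutions containing a $y\ln y$ contribution; with it, the solution space collapses precisely onto the three-dimensional Lie algebra $\mathfrak{sl}(2,\mathbb{R})$ of Killing fields of $\mathbb{H}^2$, parametrized by the three constants $(a,b,c)$.
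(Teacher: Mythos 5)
Your proposal is correct and follows essentially the route the paper intends for this corollary, namely specializing Theorem~\ref{T3} to $G\equiv 1$ (where $\int dy/(yG)=\ln y$, etc.) and then showing the compatibility conditions force $\lambda-2\rho+1=0$ and $f''=0$. Your use of the harmonicity of $N$ to kill the $y\ln y$ term is equivalent to the paper's constraint from $(L_2)$, which with $G\equiv1$ reads $h'(y)+f'(x)y=-(\lambda-2\rho+1)x/y$ and yields the same conclusion upon differentiating in $x$; the remaining integration for $M$ and $N$ matches the stated formula.
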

\begin{corollary}\label{So}
    Let  \(X\) be smooth non-constant vector field in $\mathbb{H}^2$. Then the manifold \((\mathbb{H}^2,ds^2;X, \lambda, \rho)\) is not a gradient Ricci-Bourguignon soliton.

\end{corollary}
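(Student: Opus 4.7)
The plan is to deduce this directly from Theorem \ref{T4} by specializing to the constant function $G \equiv 1$, since a Ricci--Bourguignon soliton is precisely a $G$-Ricci--Bourguignon soliton with $G \equiv 1$. By hypothesis, $X = \nabla F$, so $(\mathbb{H}^2, ds^2, \nabla F, \lambda, \rho)$ is a gradient $G$-Ricci--Bourguignon soliton with $G \equiv 1$, and Theorem \ref{T4} applies.

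Concretely, I would first invoke Theorem \ref{T4} to write
\[
F(x,y) = \frac{\tfrac{a}{2}(x^2 + y^2) + a_1 x + b}{y} + c
\]
for some real constants $a, a_1, b, c$, and to record the compatibility constraint
\[
G(x,y)\left(\tfrac{a}{2}(x^2+y^2) + a_1 x + b\right) = (\lambda - 2\rho + 1)\,y.
\]
Substituting $G(x,y) = 1$ reduces this to the polynomial identity
\[
\tfrac{a}{2}(x^2+y^2) + a_1 x + b = (\lambda - 2\rho + 1)\,y,
\]
which must hold for all $(x,y) \in \mathbb{H}^2$.

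Next I would compare coefficients of the monomials $x^2$, $y^2$, $x$, $y$ and the constant term. The $x^2$ and $y^2$ coefficients force $a = 0$; the $x$ coefficient forces $a_1 = 0$; the constant term forces $b = 0$; and consequently the $y$ coefficient gives $\lambda - 2\rho + 1 = 0$. Plugging $a = a_1 = b = 0$ back into the expression for $F$ yields $F(x,y) = c$, so $X = \nabla F \equiv 0$. This contradicts the non-constancy (i.e.\ non-vanishing) of $X$, completing the argument.

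There is essentially no real obstacle here; the only subtle point is justifying the appeal to Theorem \ref{T4} with $G \equiv 1$ by noting that the alternative case \emph{$\lambda = 2\rho - 1$ and the polynomial $\tfrac{a}{2}(x^2+y^2) + a_1x + b$ vanishes on $\mathbb{H}^2$} (which in Theorem \ref{T4} leaves $G$ unconstrained) again forces $a = a_1 = b = 0$ by the same coefficient comparison on $\mathbb{H}^2$, hence $F \equiv c$ and $X \equiv 0$. Either way, no non-trivial gradient Ricci--Bourguignon soliton structure can exist on $\mathbb{H}^2$.
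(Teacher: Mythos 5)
Your proposal is correct and follows the route the paper intends: the corollary is stated as an immediate consequence of Theorem \ref{T4}, obtained by setting $G\equiv 1$ in the constraint $G(x,y)\bigl(\tfrac{a}{2}(x^2+y^2)+a_1x+b\bigr)=(\lambda-2\rho+1)y$, comparing coefficients to force $a=a_1=b=0$ and $\lambda=2\rho-1$, and concluding $F\equiv c$, hence $X=\nabla F\equiv 0$, contradicting non-constancy. Your extra remark handling the degenerate case $\lambda=2\rho-1$ is a welcome precision the paper leaves implicit.
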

\begin{corollary}\label{T1}
   For all non constant vector fields \(X\in\mathfrak{X}(\mathbb{H}^2)\), if  \((\mathbb{H}^2,ds^2,X,\lambda)\) is a Ricci-soliton then \(X\) is Killing. Moreover   \begin{equation}
        X_{(x,y)}=\left(\frac{a}{2}\left(x^2-y^2\right)+bx+c\right)\partial x+\left(axy+by\right)\partial y.
    \end{equation} where \(a, b,c\in \mathbb{R}\).
\end{corollary}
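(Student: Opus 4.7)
The plan is to deduce Corollary \ref{T1} as an immediate consequence of Remark \ref{R0} together with the already-established Ricci--Bourguignon classification in Corollary \ref{T5}. The statements of T1 and T5 differ only in whether one allows a $\rho S g$ term on the right-hand side, and because the scalar curvature of $\mathbb{H}^2$ is a constant, the two soliton notions are equivalent up to a shift of $\lambda$. Consequently no new PDE analysis is needed.

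Concretely, suppose $(\mathbb{H}^2, ds^2, X, \lambda)$ is a Ricci soliton, so that $\Ric + \tfrac{1}{2}\mathcal{L}_X g = \lambda g$ holds. By equation \eqref{scal}, the scalar curvature is $S = -n(n-1) = -2$. Pick any $\rho \in \mathbb{R}$ and set $\lambda' = \lambda + 2\rho$; then
\[
(\lambda' + \rho S)g = (\lambda + 2\rho - 2\rho) g = \lambda g,
\]
so the soliton equation rewrites as $\Ric + \tfrac{1}{2}\mathcal{L}_X g = (\lambda' + \rho S) g$, which is precisely the Ricci--Bourguignon soliton equation \eqref{eq3h} with parameters $(\lambda',\rho)$. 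This is exactly the conversion guaranteed by Remark \ref{R0} in the case $n=2$.

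Next, because $X$ is assumed non-constant, I apply Corollary \ref{T5} to the Ricci--Bourguignon soliton $(\mathbb{H}^2, ds^2, X, \lambda', \rho)$. That corollary directly yields both conclusions of T1: $X$ is a Killing vector field, and it has the explicit form
\[
X_{(x,y)} = \left(\tfrac{a}{2}(x^2 - y^2) + bx + c\right)\partial x + (axy + by)\partial y
\]
for some $a,b,c \in \mathbb{R}$. Since this is verbatim the expression claimed by T1, the proof concludes.

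The main obstacle is essentially absent; all substantive work has already been carried out upstream in Theorem \ref{T3} and Corollary \ref{T5}. The only point requiring care is bookkeeping the shift between the Ricci-soliton constant $\lambda$ and the Ricci--Bourguignon constant $\lambda'$, which is handled uniformly by the identity $S = -2$ and Remark \ref{R0}. If anything, the mild subtlety is noticing that the shift is independent of the chosen $\rho$, so the same $X$ is simultaneously Killing for every admissible $(\lambda',\rho)$, consistent with T5.
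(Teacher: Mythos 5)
Your proposal is correct and follows essentially the route the paper intends: the paper states that the corollaries "follow easily from the theorems above," and the natural deduction is exactly yours, namely converting the Ricci soliton into a Ricci--Bourguignon soliton via the constant scalar curvature $S=-2$ (Remark \ref{R0}, or simply taking $\rho=0$) and then invoking Corollary \ref{T5}, which is Theorem \ref{T3} with $G\equiv 1$. The bookkeeping $\lambda'=\lambda+2\rho$ is consistent with Remark \ref{R0}, so nothing is missing.
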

\begin{corollary}\label{T2}
    Let \(X\) be a smooth non-constant vector field in $\mathbb{H}^2$. Then
    \((\mathbb{H}^2,ds^2,X, \lambda)\)is not a gradient Ricci- soliton, then \(X\) is constant.
   \end{corollary}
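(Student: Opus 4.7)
The plan is to reduce Corollary \ref{T2} to the already-established Corollary \ref{So}. Suppose, for contradiction, that $X$ is a non-constant smooth vector field on $\mathbb{H}^2$ with $X=\nabla F$ for some $F\in \mathcal{C}^{\infty}(\mathbb{H}^2)$, and that $(\mathbb{H}^2,ds^2,\nabla F,\lambda)$ is a gradient Ricci soliton, meaning $\Ric+\nabla^2 F=\lambda g$. I would observe that this is exactly the gradient $G$-Ricci-Bourguignon equation \eqref{eq4h} with the choice $G\equiv 1$ and $\rho=0$, since then $(\lambda+\rho S)g=\lambda g$. The potential vector field $X=\nabla F$ does not change under this identification; only the scalar parameters are renamed.

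Now I would invoke Remark \ref{R0}: on $\mathbb{H}^n$ the Ricci soliton condition is equivalent to the Ricci-Bourguignon soliton condition via the shift $\lambda\mapsto \lambda+n(n-1)\rho$, and this equivalence obviously preserves the gradient structure of the potential (the vector field itself is unchanged). Therefore $(\mathbb{H}^2,ds^2,\nabla F,\lambda',\rho)$ is also a gradient Ricci-Bourguignon soliton for an appropriate $\lambda'$, with the same non-constant potential $\nabla F$. But Corollary \ref{So} rules precisely this situation out. This contradiction forces $X$ to be constant.

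For transparency I would also sketch a direct verification via Theorem \ref{T4} applied with $G\equiv 1$ and $\rho=0$. In the case $\lambda=2\rho-1=-1$, Theorem \ref{T4} already gives $F\equiv c$ and hence $\nabla F\equiv 0$. In the remaining case $\lambda\neq -1$, Theorem \ref{T4} provides the explicit formula
\[
G(x,y)=\frac{(\lambda+1)\,y}{\tfrac{a}{2}(x^2+y^2)+a_1 x+b},
\]
and imposing $G\equiv 1$ yields the polynomial identity $\tfrac{a}{2}(x^2+y^2)+a_1 x+b=(\lambda+1)y$ on $\mathbb{H}^2$. Matching coefficients of the monomials $x^2,y^2,x,y$ and the constant term forces $a=a_1=b=0$ and $\lambda+1=0$, contradicting $\lambda\neq -1$. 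Either way $\nabla F\equiv 0$, so $X$ is constant.

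The main (and essentially only) subtle point is to verify that the equivalence in Remark \ref{R0} is compatible with the gradient hypothesis, which is immediate because the vector field $X=\nabla F$ is the same object on both sides of the equivalence; the remaining work is a trivial polynomial identification. Once this is noted, the corollary follows at once from Corollary \ref{So}.
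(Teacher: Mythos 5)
Your proposal is correct and follows essentially the same route the paper intends: the paper offers no explicit proof beyond ``using the theorems above,'' and your derivation — specializing Theorem \ref{T4} to $G\equiv 1$, $\rho=0$, handling the case $\lambda=-1$ (where $F\equiv c$) and the case $\lambda\neq -1$ (where matching coefficients in $\tfrac{a}{2}(x^2+y^2)+a_1x+b=(\lambda+1)y$ gives a contradiction) — is exactly that intended argument, with the reduction to Corollary \ref{So} via Remark \ref{R0} as a harmless repackaging of the same computation.
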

\begin{corollary}\label{Ss}
    Let \(X\in\mathfrak{X}(\mathbb{H}^2)\) such that \((\mathbb{H}^2,ds^2,X,\lambda,\rho)\) is \(G\)-Ricci-Bourguignon soliton . Then \(X\)  Killing iff \(\lambda=2\rho-1\). Moreover \begin{equation}
        X_{(x,y)}=\left(\frac{a}{2}\left(x^2-y^2\right)+bx+c\right)\partial x+\left(axy+by\right)\partial y,
    \end{equation} where \(a, b,c\in \mathbb{R}\).
\end{corollary}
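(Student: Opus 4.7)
The plan is to reduce the $G$-Ricci--Bourguignon soliton equation on $\mathbb{H}^2$ to a very concrete form by plugging in the known curvature, read off the Killing condition directly, and then specialise Theorem~\ref{T3} to get the explicit components of $X$.

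First I would substitute the data of $\mathbb{H}^2$ into the defining equation \eqref{eq3h}. Since $n=2$, Proposition~\ref{Arxiv} gives $\Ric=-ds^2$ and \eqref{scal} gives $S=-2$, so the soliton equation becomes
\begin{equation*}
\frac{G}{2}\,\mathcal{L}_X ds^2 \;=\; (\lambda-2\rho+1)\,ds^2.
\end{equation*}
Because $G\neq 0$ by the definition of a $G$-Ricci--Bourguignon soliton, this identity immediately yields the equivalence: $X$ is Killing (i.e.\ $\mathcal{L}_X ds^2=0$) if and only if $\lambda-2\rho+1=0$, that is $\lambda=2\rho-1$. This disposes of the ``iff'' statement without any computation.

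Next, to obtain the explicit form of $X$, I would plug $\lambda-2\rho+1=0$ into the conclusion of Theorem~\ref{T3}. Every integral in the expression of $X$ is multiplied by the factor $\lambda-2\rho+1$, so they all vanish, and the description collapses to
\begin{equation*}
X_{(x,y)}=\Bigl(\textstyle\int f(x)\,dx+h(y)\Bigr)\partial x + f(x)\,y\,\partial y,
\qquad h'(y)+f'(x)\,y=0,
\end{equation*}
with $f\in\mathcal{C}^\infty(\mathbb{R})$ and $h\in\mathcal{C}^\infty(\mathbb{R}_*^+)$.

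The last step is a separation of variables in the constraint. Rewriting it as $h'(y)/y=-f'(x)$, the left side depends only on $y$ and the right side only on $x$, so both equal a common constant $-a$. Integrating gives $f(x)=ax+b$ and $h(y)=-\tfrac{a}{2}y^2+c_1$, hence $\int f(x)\,dx=\tfrac{a}{2}x^2+bx+c_0$. Absorbing $c_0+c_1$ into a single constant $c$, I recover
\begin{equation*}
X_{(x,y)}=\left(\tfrac{a}{2}(x^2-y^2)+bx+c\right)\partial x+(axy+by)\,\partial y,
\end{equation*}
exactly as claimed. I do not anticipate a genuine obstacle: the only slightly delicate point is justifying that $h'(y)/y$ and $-f'(x)$ must both be constant, which is the standard separation-of-variables argument since the two sides depend on disjoint sets of variables.
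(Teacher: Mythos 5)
Your argument is correct and is essentially the route the paper intends: the paper states this corollary as an easy consequence of the preceding theorems, and your derivation (reading off the equivalence $\mathcal{L}_X ds^2=0 \Leftrightarrow \lambda=2\rho-1$ from $\frac{G}{2}\mathcal{L}_X ds^2=(\lambda-2\rho+1)ds^2$, then setting $\lambda-2\rho+1=0$ in Theorem~\ref{T3} and separating variables in $h'(y)+f'(x)y=0$) is exactly the specialisation the authors have in mind. The resulting vector field agrees with the Killing fields already obtained in Corollaries~\ref{T5} and~\ref{T1}.
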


\subsection{Poincar\'e upper half plane $\mathbb{H}^n$}
Now we can generalized the results in $\mathbb{H}^n$. 
\begin{theorem}\label{th1}
Let \(X\in \mathfrak{X}(\mathbb{H}^n)\) be a vector field. 
The manifold  \((\mathbb{H}^n,ds^2,X, \lambda, \rho)\) is a $G$-Ricci-Bourguignon soliton  iff \begin{align*}
        X(x_1,\cdots,x_n)&=\sum_{k=1}^{n-1}\left(\int f(x_1,\cdots,x_{n-1})dx_k\right)\partial x_k\\
        &+(\lambda+(n-1)(1-n\rho)\sum_{k=1}^{n-1}\int\left(\int \frac{1}{x_nG(x_1,\cdots,x_n)}dx_n\right)\partial x_k\\
        &+(\lambda+(n-1)(1-n\rho))\sum_{k=1}^{n-1}\left(\int \frac{1}{G(x_1,\cdots,x_n)}dx_k\right)\partial x_k\\
        &+\sum_{k=1}^{n-1}h_k(x_1,\cdots,x_{k-1},\widehat{x}_k,x_{k+1},\cdots,x_n)\partial x_k\\
        &+\left(f(x_1,\cdots,x_{n-1})x_n
        +(\lambda+(n-1)(1-n\rho))x_n\int \frac{1}{x_nG(x_1,\cdots,x_n)}dx_n\right)\partial x_n
    \end{align*}
where \(f:\mathbb{R}^{n-1}\rightarrow \mathbb{R}\) is a smooth function, and each \(h_k(x_1,...,x_{k-1},\widehat{x}_k,x_{k+1},...,x_n)\) is a smooth function of the other variables \(x_j\) with \(j\ne k\) and they verify all of the following constraints:\begin{itemize}
            \item[i)] For all \(k\in \lbrace 1,...,n-1\rbrace\)\begin{align*}&\frac{\partial h_k(x_1,...,x_{k-1},\widehat{x}_k,x_{k+1},...,x_n)}{\partial x_n}=-x_n\frac{\partial f(x_1,...,x_{n-1})}{\partial x_k}+\\&(-\lambda+(n-1)(n\rho-1))\left(\frac{\partial \left(x_n\int \frac{dx_n}{x_nG(x_1,...,x_n)}\right)}{\partial x_k}+\frac{\partial\left(\int\left(\int \frac{dx_n}{x_nG(x_1,...,x_n)}\right)dx_k+\int\frac{dx_k}{G(x_1,...,x_n)}\right)}{\partial x_n}\right).\end{align*}
            \item[ii)] For all \(k,j\in\lbrace1,...,n-1\rbrace\) such that \(k\ne j\).
            \begin{align*}
                &\frac{\partial\left(\int f(x_1,...,x_{n-1})dx_k\right)}{\partial x_j}+\frac{\partial \left(\int f(x_1,...,x_{n-1})dx_j\right)}{\partial x_k}+\frac{\partial h_k(x_1,...,x_{k-1},\widehat{x}_k,x_{k+1},...,x_n)}{\partial x_j}\\
                &+\frac{\partial h_j(x_1,...,x_{j-1},\widehat{x}_j,x_{j
                +1},...,x_n)}{\partial x_k}\\
               & =(-\lambda+(n-1)(n\rho-1))\left(\frac{\partial\left(\int \frac{dx_k}{G(x_1,...,x_n)}+\int\left(\int  \frac{dx_n}{x_nG(x_1,...,x_n)}\right)dx_k\right)}{\partial x_j}\right)\\
               &+(-\lambda+(n-1)(n\rho-1))\left( \frac{\partial \left(\int \frac{dx_j}{G(x_1,...,x_n)}+\int\left(\int \frac{dx_n}{x_nG(x_1,...,x_n)}\right)dx_j\right)}{\partial x_k}\right).
            \end{align*}
    \end{itemize}
   
    \end{theorem}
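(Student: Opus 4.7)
The plan is to follow the template of Theorem~\ref{T3} in arbitrary dimension. Writing $X=\sum_{k=1}^{n}X_k\,\partial x_k$ and inserting \eqref{Ric}, \eqref{scal}, and the Lie derivative formula \eqref{f2} into the $G$-Ricci-Bourguignon equation \eqref{eq3h}, the common factor $x_n^{-2}$ cancels and I am left with a PDE system in which the combined constant $\mu:=\lambda+(n-1)(1-n\rho)$ appears on the diagonal while the off-diagonal side vanishes:
\begin{equation*}
G\!\left(\partial_i X_i-\tfrac{X_n}{x_n}\right)=\mu\ (i<n),\quad G\!\left(\partial_n X_n-\tfrac{X_n}{x_n}\right)=\mu,\quad \partial_i X_j+\partial_j X_i=0\ (i\ne j).
\end{equation*}
These are the direct $n$-dimensional analogues of the lines $(L_1),(L_2),(L_3)$ encountered in the proof of Theorem~\ref{T3}.

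First I would solve the $(n,n)$-equation, which is a first-order linear ODE in $x_n$ with the remaining coordinates as parameters. Its homogeneous solution is $f(x_1,\ldots,x_{n-1})\,x_n$ for an arbitrary smooth $f$, and variation of constants supplies the particular solution $\mu\,x_n\int\frac{dx_n}{x_n G}$; their sum is the $\partial x_n$ component stated in the theorem. Substituting this expression for $X_n$ into the diagonal equation at any index $k<n$ yields $\partial_k X_k=f+\mu\int\frac{dx_n}{x_n G}+\mu/G$, and integrating in $x_k$ produces $X_k$ up to a smooth function $h_k(x_1,\ldots,\widehat{x}_k,\ldots,x_n)$ of the remaining variables, exactly as in the statement.

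At this stage every diagonal equation is satisfied and only the off-diagonal equations $\partial_i X_j+\partial_j X_i=0$ remain. I would split them into two groups. When $\{i,j\}=\{k,n\}$ with $k<n$, substituting the explicit formulas for $X_k$ and $X_n$ and using $\partial_n f=0$ isolates $\partial_n h_k$ in terms of $\partial_k f$ and derivatives of the integral terms, which is exactly constraint~(i). When $\{i,j\}\subset\{1,\ldots,n-1\}$ with $i\neq j$, the same substitution couples $\partial_j h_k+\partial_k h_j$ to the analogous mixed derivatives of the integral expressions, giving constraint~(ii). Retracing the construction reverses the implication, so the biconditional follows.

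The main obstacle is not a single hard computation but the bookkeeping of the nested indefinite integrals: I must check that in both (i) and (ii) the compatibility conditions land on the free functions $h_k$ (and only on $f$ through $\partial_k f$), rather than imposing hidden overdetermined restrictions on $f$ or $G$. This relies essentially on $f$ being independent of $x_n$, which cancels precisely the spurious terms that would otherwise appear, and on a consistent choice of antiderivatives for the nested integrals across the different components of $X$.
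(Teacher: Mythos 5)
Your proposal follows essentially the same route as the paper's proof: reduce the soliton equation to the system $G(\partial_kX_k-\tfrac{X_n}{x_n})=\lambda+(n-1)(1-n\rho)$ on the diagonal and $\partial_kX_j+\partial_jX_k=0$ off the diagonal, solve the $(n,n)$ line as a linear ODE in $x_n$ (homogeneous solution $f(x_1,\dots,x_{n-1})x_n$ plus variation of constants), integrate the remaining diagonal lines to obtain the $X_k$ up to the free functions $h_k$, and read constraints (i) and (ii) off the two families of off-diagonal equations. This matches the paper's argument step for step, so no further comparison is needed.
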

    \begin{proof}
Let $ X=\sum\limits_{k=1}^nX_k(x_1,\cdots,x_n)\partial x_k\in \mathcal{X}(\mathbb{H}^n)$ be a vector field. The equations \ref{f2} and \eqref{eq3h}, allow us to obtain the system:  
\[\begin{cases}
    G(x_1,\cdots,x_n)(\mathcal{L}_Xg)_{kk}=G(x_1,\cdots,x_n)\left(\frac{2}{x_n^2}\partial_kX_k-\frac{2}{x_n^3} X_n\right)=2\left(\frac{\lambda+(n-1)(1-n\rho)}{x_n^2}\right)\quad(L_8)\\\\
    (\mathcal{L}_Xg)_{kj}=(\mathcal{L}_Xg)_{jk}= \frac{\partial_kX_j+\partial_jX_k}{x_n^2}=0.\quad (L_9)
\end{cases}\]
The line $(L_8)$ gives $(X_n)_{x_n}-\frac{1}{x_n}X_n=\frac{\lambda+(n-1)(1-n\rho)}{G(x_1,\cdots,x_n)}$.
The solution of the homogeneous equation is \((X_n)_{0}=f(x_1,\cdots,x_{n-1})x_n.\) 
Now let us putting  \((X_n)_1=k(x_1,\cdots,x_n)x_n\) as particular 
 solution of $(L_8)$, then we have  \[\frac{\partial k(x_1,\cdots,x_n)}{\partial x_n}x_n=\frac{\lambda+(n-1)(1-n\rho)}{G(x_1,\cdots,x_n)}\Rightarrow k(x_1,\cdots,x_n)=(\lambda+(n-1)(1-n\rho))\int\frac{dx_n}{x_nG(x_1,\cdots,x_n)}.\]
The general solution of $(L_8)$ is
\[X_n(x_1,\cdots,x_n)=f(x_1,\cdots,x_{n-1})x_n+(\lambda+(n-1)(1-n\rho))x_n\int\frac{dx_n}{x_nG(x_1,\cdots,x_n)}.\]
\begin{align*}(L_8)\Rightarrow (X_k)_{x_k}&=\frac{X_n}{x_n}+\frac{\lambda+(n-1)(1-n\rho)}{G(x_1,\cdots,x_n)}\\
&=f(x_1,\cdots,x_{n-1})+(\lambda+(n-1)(1-n\rho))\int \frac{dx_n}{x_nG(x_1,\cdots,x_n)}+\frac{\lambda+(n-1)(1-n\rho)}{G(x_1,\cdots,x_n)}.\end{align*}
\begin{align*}
X_k(x_1,\cdots,x_n)&=\int f(x_1,\cdots,x_{n-1})\mathrm{d}x_k\\
&+(\lambda+(n-1)(1-n\rho))\left(\int\frac{dx_k}{G(x_1,\cdots,x_n)}
+\int\left(\int \frac{dx_n}{x_nG(x_1,\cdots,x_n)}\right)dx_k\right)\\
&+h_k(x_1,\cdots,x_{k-1},\widehat{x}_k,x_{k+1},\cdots,x_n).
\end{align*}
Since \[\frac{\partial X_k(x_1,\cdots,x_n)}{\partial x_j}+\frac{\partial X_j(x_1,\cdots,x_n)}{\partial x_k}=0\]
\[f\in\mathcal{C}^{\infty}\left(\mathbb{R}^{n-1}\right)\quad \text{and} \quad h_k\in\mathcal{C}^{\infty}\left(\left\{(x_1,\cdots,x_{k-1},\widehat{x}_k,x_{k+1},\cdots,x_n)\right\}\subset\mathbb{R}^{n-2}\times \mathbb{R}^{*}_+\right).\] 
So we have
\begin{itemize}
    \item  for \(k,j\in\lbrace 1,...,n-1\rbrace\) and \(k\ne j\):
\begin{align*}&\frac{\partial \left(\int f(x_1,...,x_{n-1})dx_k\right) }{\partial x_j}
+\frac{\partial\left(\int f(x_1,...,x_{n-1})dx_j\right)}{\partial x_k}+\frac{\partial h_k(x_1,\cdots,x_{k-1},\widehat{x}_k,x_{k+1},\cdots,x_{n})}{\partial x_j}\\
&+\frac{\partial h_j(x_1,...,x_{j-1},\widehat{x}_j,x_{j+1},...,x_n)}{\partial x_k}\\
&=\left(\lambda+(n-1)(n\rho-1)\right)\left(\frac{\partial \left(\int\frac{dx_k}{G(x_1,...,x_n)}+\int\left(\int\frac{dx_n}{x_nG(x_1,...,x_n)}\right)dx_k\right)}{\partial x_j}\right)\\
&+( \lambda+(n-1)(n\rho-1)\left(\frac{\partial\left(\int\frac{dx_j}{G(x_1,...,x_n)}+\int\left(\int\frac{dx_n}{x_nG(x_1,...,x_n)}\right)dx_j\right)}{\partial x_k}\right),
\end{align*}
\item for \(j=n\) \begin{align*}&\frac{\partial h_k(x_1,...,x_{k-1},\widehat{x}_k,x_{k+1},...,x_n)}{\partial x_n}+x_n\frac{\partial f(x_1,...,x_{n-1})}{\partial x_k}\\
&=(-\lambda+(n-1)(n\rho-1)\left(\frac{\partial\left(x_n\int \frac{dx_n}{x_nG(x_1,...,x_n)}\right)}{\partial x_k}+\frac{\partial\left(\int\left(\int\frac{dx_n}{x_nG(x_1,...,x_n)}\right)dx_k+\int \frac{dx_k}{G(x_1,...,x_n)}\right)}{\partial x_n}\right).\end{align*}
\end{itemize}
\end{proof}
    \begin{theorem}\label{pp}
 If \( X \in \mathfrak{X}(\mathbb{H}^2) \) is a smooth vector field. If there exists a smooth function \( F \in \mathcal{C}^\infty(\mathbb{H}^n) \) such that
$X = \nabla F$
and  \( (\mathbb{H}^n, ds^2, \nabla F,\lambda,\rho) \) is a $G$-gradient-Ricci Bourguignon soliton, then we have 
\[F(x_1,...,x_n)=\frac{1}{x_n}\sum_{i=1}^{n-1}\left(\frac{a}{2}x_i^2+b_ix_i\right)+\frac{a}{2}x_n+\frac{c}{x_n}+e\] where \(a,b_i,c,e\in\mathbb{R}\).
Moreover if the polynome \(P(x_1,...,x_n)=\frac{a}{2}x_n^2+c+\sum\limits_{k=1}^{n-1}\left(\frac{a}{2}x_k^2+b_kx_k\right)\) does not admit any zero in \(\mathbb{H}^n\) then we have
\[
 G(x_1,...,x_n) = \frac{\left(\lambda+(n-1)(1-n\rho)\right)x_n}{\frac{a}{2}x_n^2+c+\sum\limits_{k=1}^{n-1}\left(\frac{a}{2}x_k^2+b_kx_k\right)}.
    \]  \end{theorem}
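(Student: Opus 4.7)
The plan is to imitate the strategy of Theorem~\ref{T4}, now carried out in dimension~$n$. First, using $\operatorname{Ric}=-(n-1)g$ and $S=-n(n-1)$ from equations~\eqref{Ric} and~\eqref{scal}, the gradient $G$-Ricci-Bourguignon equation~\eqref{eq4h} reduces to $G\,\nabla^{2}F=\mu\,g$ with $\mu:=\lambda+(n-1)(1-n\rho)$. Plugging in the explicit Hessian components listed in the preliminaries (Cases~1--3), this yields four families of equations: the mixed equations $\partial_n\partial_i F+\tfrac{1}{x_n}\partial_i F=0$ for $i<n$; the purely off-diagonal equations $\partial_i\partial_j F=0$ for $i\neq j$ with $i,j<n$; the horizontal diagonal equations $G(\partial_i^{2}F-\tfrac{1}{x_n}\partial_n F)=\mu/x_n^{2}$ for $i<n$; and the vertical diagonal equation $G(\partial_n^{2}F+\tfrac{1}{x_n}\partial_n F)=\mu/x_n^{2}$.

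Next I would exploit these in a structured order. The mixed equations can be rewritten as $\partial_n(x_n\,\partial_i F)=0$, so that $x_n\,\partial_i F=A_i(x_1,\dots,x_{n-1})$. Integrating forces the separated form $F(x_1,\dots,x_n)=\dfrac{C(x_1,\dots,x_{n-1})}{x_n}+B(x_n)$. Feeding this into the purely off-diagonal equations gives $\partial_i\partial_j C=0$ for $i\neq j$, so $C(x_1,\dots,x_{n-1})=\sum_{i=1}^{n-1}C_i(x_i)$ splits as a sum of one-variable functions.

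Then I would compare the horizontal and vertical diagonal equations. Substituting the separated form, the identity $G(\text{horizontal})=G(\text{vertical})$ yields, after clearing $x_n$, the relation $C_i''(x_i)=x_n B''(x_n)+2B'(x_n)$ for every $i\in\{1,\dots,n-1\}$. Since the left side depends only on $x_i$ and the right only on $x_n$, each side must equal a common constant $a$. Solving $C_i''(x_i)=a$ gives $C_i(x_i)=\tfrac{a}{2}x_i^{2}+b_i x_i+\alpha_i$; solving $x_n B''(x_n)+2B'(x_n)=a$ exactly as in the proof of Theorem~\ref{T4} yields $B(x_n)=\tfrac{a}{2}x_n-\tfrac{\gamma}{x_n}+e$ for some constants $\gamma,e$. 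Absorbing $\sum_i\alpha_i-\gamma$ into a single constant $c$ delivers the claimed expression for $F$.

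Finally, plugging this explicit $F$ back into the vertical diagonal equation $G\bigl(C+x_n^{3}B''(x_n)+x_n^{2}B'(x_n)\bigr)=\mu x_n$ gives $G\bigl(\tfrac{a}{2}x_n^{2}+c+\sum_{k=1}^{n-1}(\tfrac{a}{2}x_k^{2}+b_k x_k)\bigr)=\mu x_n$, from which the formula for $G$ follows whenever the polynomial $P(x_1,\dots,x_n)$ does not vanish in $\mathbb{H}^n$. The main obstacle is not any single computation---each step is a direct transcription of the $n=2$ argument---but rather the separation-of-variables step producing the constant $a$: it requires checking that $C_i''(x_i)$ is the \emph{same} constant for every index $i$, which follows because the right-hand side $x_nB''(x_n)+2B'(x_n)$ is independent of $i$. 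The remainder is careful index bookkeeping.
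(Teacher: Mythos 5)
Your proposal is correct and follows essentially the same route as the paper: reduce to $G\nabla^2F=\mu g$, use the off-diagonal and mixed Hessian equations to force the separated form $F=\frac{1}{x_n}\sum_i C_i(x_i)+B(x_n)$, extract the common constant $a$ from the diagonal equations, solve the resulting ODE for $B$ exactly as in Theorem~\ref{T4}, and read off $G$ by back-substitution. The only cosmetic differences are the order in which the mixed and off-diagonal equations are exploited and that you obtain $a$ by separating $x_i$ from $x_n$, whereas the paper first notes $C_i''=C_j''$ across indices; both give the same conclusion.
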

\begin{proof}
The equation $\nabla^2(F) = \frac{\lambda+(n-1)(1-n\rho)}{x_n^2} I_n$ means that 
\begin{itemize}
      \item For $1 \le i < n$, we have
      \begin{enumerate}
          \item   $\frac{\partial^2 F}{\partial x_i^2} - \frac{1}{x_n}  \frac{\partial F}{\partial x_n} =\frac{\lambda+(n-1)(1-n\rho)}{x_n^2G(x_1,...,x_n)}$
    \item $\frac{\partial^2 F}{\partial x_i \partial x_j} = 0.$ 
      \end{enumerate}
\item For $i= n$, we have $\frac{\partial^2 F}{\partial x_n^2} + \frac{1}{x_n} \frac{\partial F}{\partial x_n} = \frac{\lambda+(n-1)(1-n\rho)}{x_n^2G(x_1,...,x_n)}.$

    \item For $i < n$, we have $\frac{\partial^2 F}{\partial x_n \partial x_i} + \frac{1}{x_n} \frac{\partial F}{\partial x_i} = 0.$ 
\end{itemize}
By the equations  $\frac{\partial^2 F}{\partial x_i \partial x_j} = 0$ for $i \neq j$ and $i, j < n$, we deduce that $F$ is at the form 
$$F(x_1, \dots, x_n) = F_1(x_1, x_n) + F_2(x_2, x_n) + \cdots + F_{n-1}(x_{n-1}, x_n) + \phi(x_n).$$

By differentiation with respect to $x_i$ (for $i < n$), we obtain $\frac{\partial F}{\partial x_i} = \frac{\partial F_i}{\partial x_i}(x_i, x_n)$. The condition $\frac{\partial^2 F}{\partial x_i \partial x_n} + \frac{1}{x_n} \frac{\partial F}{\partial x_i} = 0$ becomes :
$$\frac{\partial^2 F_i}{\partial x_n \partial x_i} + \frac{1}{x_n} \frac{\partial F_i}{\partial x_i} = 0\Rightarrow \frac{\partial\left(x_n\frac{\partial F_i(x_i,x_n)}{\partial x_i}\right)}{\partial x_n}=0$$
so we have $\frac{\partial F_i}{\partial x_i} = \frac{A_i(x_i)}{x_n}$. By integrating about $x_i$, we obtain  $$F_i(x_i, x_n) = \frac{\int  A_i(x_i)dx_i}{x_n} + B_i(x_n)\Rightarrow F_i(x_i,x_n)=\frac{C_i(x_i)}{x_n}+B_i(x_n).$$ 
So we have  \[F(x_1,...,x_n)=\frac{1}{x_n}\sum_{i=1}^{n-1}C_i(x_i)+B(x_n)\] where \[C_i(x_i)=\int A_i(x_i)dx_i\quad \forall \quad i\in\{1,...,n-1\}\quad \text{and}\quad B(x_n)=\phi(x_n)+\sum_{i=1}^{n-1}B_i(x_n).\]
Now consider the equation $\frac{\partial^2 F}{\partial x_i^2} - \frac{1}{x_n} \frac{\partial F}{\partial x_n} = \frac{\rho'}{x_n^2G(x_1,...,x_n)}$ for $i < n$. By using the expression of  $F$, we have :
\begin{equation}\label{1e}
    \frac{1}{x_n}C_i''(x_i) - \frac{1}{x_n} \left( B'(x_n)-\frac{1}{x_n^2}\sum_{k=1}^{n-1}C_k(x_k)\right) =  \frac{\rho'}{x_n^2G(x_1,...,x_n)}.
\end{equation}
The equation \eqref{1e} shows that for all \(i,j<n\) such that \(i\ne j\),  \(C''_i(x_i)=C''_j(x_j)\). This shows that \(C_i(x_i)=\frac{a}{2}x_i^2+b_ix_i+c_i\)
The quation $\frac{\partial^2 F}{\partial x_n^2} + \frac{1}{x_n} \frac{\partial F}{\partial x_n} = \frac{\lambda+(n-1)(1-n\rho)}{x_n^2G(x_1,...,x_n)}$ implies that \begin{equation*}
    B''(x_n)+\frac{2}{x_n^3}\sum_{k=1}^{n-1}\left(\frac{a}{2}x_k^2+b_kx_k+c_k\right)+\frac{1}{x_n}\left(B'(x_n)-\frac{1}{x_n^2}\sum_{k=1}^{n-1}\left(\frac{a}{2}x_k^2+b_kx_k+c_k\right)\right)=\frac{\lambda+(n-1)(1-n\rho)}{x_n^2G(x_1,...,x_n)}.
\end{equation*}
That is 
\begin{equation}\label{2e}
    B''(x_n)+\frac{1}{x_n^3}\sum_{k=1}^{n-1}\left(\frac{a}{2}x_k^2+b_kx_k+c_k\right)+\frac{1}{x_n}B'(x_n)=\frac{\lambda+(n-1)(1-n\rho)}{x_n^2G(x_1,...,x_n)}.
\end{equation}
So the equations \eqref{1e} and \eqref{2e} give \[\frac{a}{x_n} - \frac{1}{x_n} \left( B'(x_n)-\frac{1}{x_n^2}\sum_{k=1}^{n-1}\left(\frac{a}{2}x_k^2+b_kx_k+c_k\right)\right) =  B''(x_n)+\frac{1}{x_n^3}\sum_{k=1}^{n-1}\left(\frac{a}{2}x_k^2+b_kx_k+c_k\right)+\frac{1}{x_n}B'(x_n) \]
\[B''(x_n)+\frac{2}{x_n}B'(x_n)=\frac{a}{x_n}.\]
If we put \(\varphi(x_n)=B'(x_n)\) then we have \begin{eqnarray}\label{phi}\varphi'(x_n)+\frac{2}{x_n}\varphi(x_n)=\frac{a}{x_n},
\end{eqnarray}
where a homogeneous solution is \(\varphi_0(x_n)=\frac{e_1}{x^2_n}\). Let \(\varphi_2(x_n)=\frac{k(x_n)}{x_n^2}\) be a particular solution of the equation \eqref{phi}, then  \[\frac{k'(x_n)}{x_n^2}=\frac{a}{x_n}\Rightarrow k(x_n)=\frac{a}{2}x_n^2+e_2\] that is \(\varphi(x_n)=\frac{a}{2}+\frac{b}{x_n^2}\) where \(b=e_1+e_2\), which means that\[B(x_n)=\frac{a}{2}x_n-\frac{b}{x_n}+e\] and 
\[F(x_1,...,x_n)=\frac{1}{x_n}\sum_{i=1}^{n-1}\left(\frac{a}{2}x_i^2+b_ix_i\right)+\frac{a}{2}x_n+\frac{c}{x_n}+e\] where \(c=-b+\sum\limits_{i=1}^{n-1}c_i.\)
The equation \eqref{1e} implies that $$\frac{a}{x_n}-\frac{\varphi(x_n)}{x_n}+\frac{1}{x_n^3}\sum_{k=1}^{n-1}\left(\frac{a}{2}x_k^2+b_kx_k+c_k\right)=\frac{\lambda+(n-1)(1-n\rho)}{x_n^2G(x_1,...,x_n)}$$
\[\frac{a}{2}x_n^2+c+\sum\limits_{k=1}^{n-1}\left(\frac{a}{2}x_k^2+b_kx_k\right)=\frac{\left(\lambda+(n-1)(1-n\rho)\right)x_n}{G(x_1,...,x_n)}.\]
Now we have 
\begin{enumerate}
    \item  \(\lambda=-(n-1)(1-n\rho)\) if and only if \(\frac{a}{2}x_n^2+c+\sum\limits_{k=1}^{n-1}\left(\frac{a}{2}x_k^2+b_kx_k\right)=0\) so \(F(x_1,...,x_n)=e\) and \(G\) is any function.
    \item  \(\lambda\ne-(n-1)(1-n\rho)\) if and only if \(\frac{a}{2}x_n^2+c+\sum\limits_{k=1}^{n-1}\left(\frac{a}{2}x_k^2+b_kx_k\right)\ne0\)  for all \((x_1,...,x_n)\in\mathbb{H}^n\) so \[G(x_1,...,x_n)=\frac{\left(\lambda+(n-1)(1-n\rho)\right)x_n}{\frac{a}{2}x_n^2+c+\sum\limits_{k=1}^{n-1}\left(\frac{a}{2}x_k^2+b_kx_k\right)}.\]
\end{enumerate}
\end{proof}
 \begin{theorem}  \label{th3} 
    Let \(X\in \mathfrak{X}(\mathbb{H}^n)\). If \((\mathbb{H}^n,ds^2,X,\lambda,\rho)\) is Ricci-Bourguignon soliton  or  \((\mathbb{H}^n,ds^2,X,\lambda)\)is a Ricci soliton, then \(X\) is Killing.
Moreover we have \begin{align*}
        &X(x_1,\cdots,x_n)=\sum_{k=1}^{n-1}\left(\frac{a_k}{2} \left( x_k^2-\sum_{j\in\{1,...,n\}\setminus\{k\}}x_j^2\right) + \left( \sum_{i=1, i \neq k}^{n-1} a_i x_i + b \right) x_k+c_k \right)\partial x_k\\
        &+\left(\left( \sum_{k=1}^{n-1} a_k x_k + b\right)x_n\right)\partial x_n
    \end{align*} where \(a_k,b,c_k\) are constant reals for \(k\in\lbrace 1,...,n-1\rbrace\).
    \end{theorem}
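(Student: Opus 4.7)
The plan is to reduce both the Ricci-soliton case (\(\rho=0\)) and the Ricci--Bourguignon case to the same conformal-type PDE system by using the curvature identities \eqref{Ric} and \eqref{scal}. Substituting \(\operatorname{Ric} = -(n-1)g\) and \(S = -n(n-1)\) into the soliton equation produces
\[\tfrac{1}{2}\mathcal{L}_X g = c\, g, \qquad c := \lambda + (n-1)(1-n\rho),\]
and by \eqref{f2} this is equivalent to
\[\partial_i X_i - \frac{X_n}{x_n} = c \quad (1\le i\le n), \qquad \partial_i X_j + \partial_j X_i = 0\quad (i\neq j).\]
Taking \(i=n\) in the diagonal relation and solving the first-order linear ODE in \(x_n\) yields \(X_n = f(x_1,\ldots,x_{n-1})\, x_n + c\, x_n\ln x_n\); for \(i<n\) integrating the diagonal equation in \(x_i\) then gives \(X_i = \bigl(c + f + c\ln x_n\bigr)x_i + h_i\) with \(h_i\) independent of \(x_i\).

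The central step, and the one I expect to be the main obstacle, is to show that the conformal constant \(c\) must vanish; this is what forces \(X\) to be Killing rather than merely homothetic. I would plug the expressions for \(X_i\) and \(X_n\) into the off-diagonal equation \(\partial_n X_i + \partial_i X_n = 0\) to obtain
\[\partial_n h_i + \frac{c\, x_i}{x_n} + \frac{\partial f}{\partial x_i}\, x_n = 0,\]
and then differentiate once more with respect to \(x_i\) (legitimate since \(h_i\) does not depend on \(x_i\)) to get
\[\frac{c}{x_n} + x_n\, \frac{\partial^2 f}{\partial x_i^2} = 0.\]
Because \(f\) depends only on \(x_1,\ldots,x_{n-1}\), the coefficient \(\partial^2 f/\partial x_i^2\) is independent of \(x_n\); multiplying through by \(x_n\) yields \(c + x_n^2\, \partial^2 f/\partial x_i^2 = 0\) for every \(x_n>0\), so both \(c=0\) and \(\partial^2 f/\partial x_i^2 = 0\) for every \(i<n\). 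Thus \(X\) is a Killing field and \(f(x_1,\ldots,x_{n-1}) = \sum_{k=1}^{n-1} a_k x_k + b\) is affine.

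Once \(c=0\), reading off the explicit form is a direct integration. One has \(X_n = \bigl(\sum_k a_k x_k + b\bigr)x_n\); the mixed equation \(\partial_n X_i + \partial_i X_n = 0\) reduces to \(\partial_n h_i = -a_i x_n\), so that \(h_i = -\tfrac{a_i}{2}x_n^2 + g_i(x_1,\ldots,\widehat{x}_i,\ldots,x_{n-1})\); and the remaining off-diagonal equations for indices \(i,j<n\) become \(\partial_j g_i + \partial_i g_j = -a_i x_j - a_j x_i\). Choosing \(g_i = -\tfrac{a_i}{2}\sum_{k<n,\,k\neq i} x_k^2 + c_i\) solves this system, and assembling the pieces reproduces the announced closed form for \(X\). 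As an alternative route, one could specialize Theorem \ref{th1} to \(G\equiv 1\); the logarithmic terms appearing there are precisely what the compatibility constraints eliminate, arriving at the same conclusion.
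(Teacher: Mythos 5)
Your argument follows essentially the same route as the paper's: reduce the soliton equation to $\tfrac12\mathcal{L}_Xg=cg$ with $c=\lambda+(n-1)(1-n\rho)$, integrate the diagonal equations, and then differentiate the mixed constraint $\partial_n h_i+\tfrac{c\,x_i}{x_n}+x_n\,\partial_i f=0$ with respect to $x_i$ to force $c=0$ and $\partial_i^2f=0$. This is exactly the paper's key step (it obtains the same identity $\partial_i^2 f=-c/x_n^2$ by setting $G\equiv1$ in Theorem~\ref{th1}, as you note at the end), and that part of your proposal is correct. One notational slip: integrating $\partial_iX_i=c+f+c\ln x_n$ in $x_i$ gives $X_i=\int f\,dx_i+(c+c\ln x_n)x_i+h_i$, not $(c+f+c\ln x_n)x_i+h_i$, since $f$ may depend on $x_i$; this does not affect your computation of $\partial_nX_i$, but it should be written correctly.

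The one genuine gap is the jump from $\partial_i^2f=0$ for all $i<n$ to ``$f$ is affine.'' For $n\ge3$ this does not follow: $f=x_1x_2$ satisfies $\partial_1^2f=\partial_2^2f=0$ without being affine. You also need the mixed second derivatives $\partial_i\partial_jf=0$ for $i\ne j<n$, which the paper extracts from the off-diagonal equations with both indices below $n$: substituting $h_i=-\tfrac{x_n^2}{2}\partial_if+g_i$ (with $g_i$ independent of $x_n$) into $\partial_jX_i+\partial_iX_j=0$ and isolating the coefficient of $x_n^2$ gives $\partial_i\partial_jf=0$. With that supplied, your derivation of the explicit form proceeds. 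A secondary point: writing ``choosing $g_i=-\tfrac{a_i}{2}\sum_{k\ne i}x_k^2+c_i$ solves this system'' only exhibits one solution, whereas the theorem asserts that \emph{every} soliton field has the stated form; to close that you must determine the general solution of $\partial_jg_i+\partial_ig_j=-a_ix_j-a_jx_i$. (The paper does attempt this; note that for $n\ge3$ this system also admits antisymmetric rotational solutions in the variables $x_1,\dots,x_{n-1}$, a degree of freedom on which both your write-up and the paper's final formula are silent.)
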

    \begin{proof}
     To find the necessary and sufficient condition on \(X\) for that \((\mathbb{H}^n,ds^2,X,\lambda,\rho)\) to be Ricci-Bourguignon soliton, just take  \(G(x_1,\cdots,x_n)=1\) in the proof of the Theorem \ref{th1}. This directly implies that
constraints become\[\frac{\partial h_k(x_1,...,x_{k-1},\widehat{x}_k,x_{k+1},..,x_n)}{\partial x_n}+x_n\frac{\partial f(x_1,...,x_{n-1})}{\partial x_k}=(-\lambda+(n-1)(n\rho-1)\frac{x_k}{x_n}\] which implies that \[\frac{\partial^2f(x_1,..,x_{n-1})}{\partial^2x_k}=\frac{(-\lambda+(n-1)(n\rho-1))}{x^2_n}\] which is possible that if $\lambda=(n-1)(n\rho-1)$, so the vector field \(X\) is Killing. Moreover \(\forall k\in\lbrace1,...,n-1\rbrace\), we have  \[\frac{\partial^2f(x_1,...,x_{n-1})}{\partial^2x_k}=0.\] 
 For all \(k,j\in \lbrace 1,...,n-1\rbrace\) such that \(j\ne k\) \begin{align*}
                &\frac{\partial h_k(x_1,...,x_{k-1},\widehat{x}_k,x_{k+1},...,x_n)}{\partial x_j}+\frac{\partial h_j(x_1,...,x_{j-1},\widehat{x}_j,x_{j+1},...,x_n)}{\partial x_k}\\
                &=-\left(\frac{\partial\left(\int f(x_1,..x_{n-1})dx_k\right)}{\partial x_j } +\frac{\partial \left(\int f(x_1,...,x_{n-1})dx_j\right)}{\partial x_k}\right)
            \end{align*}
        so we have \begin{align*}
                &\frac{\partial^2 h_k(x_1,...,x_{k-1},\widehat{x}_k,x_{k+1},...,x_n)}{\partial x_n\partial x_j}+\frac{\partial^2 h_j(x_1,...,x_{j-1},\widehat{x}_j,x_{j+1},...,x_n)}{\partial x_n\partial x_k}=0
            \end{align*}
            which is to say that \begin{align*}
                &\frac{\partial}{\partial x_j}\left(\frac{\partial h_k(x_1,...,x_{k-1},\widehat{x}_k,x_{k+1},...,x_n)}{\partial x_n}\right)+\frac{\partial}{\partial x_k}\left(\frac{\partial h_j(x_1,...,x_{j-1},\widehat{x}_j,x_{j+1},...,x_n)}{\partial x_n}\right)=0.
            \end{align*}
            Thus we have \[-2x_n\frac{\partial^2f(x_1,...,x_{n-1})}{\partial x_k\partial x_j}=0\Rightarrow \frac{\partial^2f(x_1,...,x_{n-1})}{\partial x_k\partial x_j}=0.\]
            \begin{itemize}
                \item   Consider the partial derivative of $f$ with respect to $x_j$:
    \[
    \psi_j(x_1, \ldots, x_{n-1}) = \frac{\partial f(x_1,\ldots,x_{n-1})}{\partial x_j}.
    \]

\item   The given condition implies that the partial derivative of $\psi_j$ with respect to any other variable $x_k$ (where $k \in {1, ..., n-1}$) is zero:
    \[
    \frac{\partial \psi_j}{\partial x_k} = \frac{\partial}{\partial x_k} \left( \frac{\partial f}{\partial x_j} \right) = \frac{\partial^2 f}{\partial x_k \partial x_j} = 0.
    \]
    This is true for any $k = {1, \ldots, n-1}$. This means that $\psi_j$ does not depend on $x_k$ for all $k \neq j$. Therefore, $\psi_j$ is a function that depends only on $x_j$, or even a constant with respect to all variables.

\item   Let $\psi_j(x_1, \ldots, x_{n-1}) = a_j$, where $a_j$ is a constant (which may depend on the index $j$). Then we obtain:
    \begin{eqnarray}\label{eqt4}
    \frac{\partial f(x_1,\ldots,x_{n-1})}{\partial x_j} = a_j.
\end{eqnarray}

\item  Let's integrate this equation (\ref{eqt4})  with respect to $x_j$, considering the other variables as constants, we have:
    \[
    f(x_1, \ldots, x_{n-1}) = \int a_j \, dx_j = a_j x_j + \eta_j(x_1, \ldots, x_{j-1}, \widehat{x}_j, x_{j+1}, \ldots, x_{n-1})
    \]
    where $\eta_j$ is an arbitrary function of other variable $n-2$.
\item   To satisfy this form for all $j = 1, \ldots, n-1$ simultaneously, the function $f$ must be a linear combination of the variables $x_1, \ldots, x_{n-1}$ plus a constant, i.e:
    \[
    f(x_1, \ldots, x_{n-1}) = a_1 x_1 + a_2 x_2 + \cdots + a_{n-1} x_{n-1} + b = \sum_{i=1}^{n-1} a_i x_i + b
    \]
    where $a_1, \ldots, a_{n-1}$ and $b$ constant reals. So we have \[\int f(x_1,...,x_{n-1})dx_k=\frac{1}{2} a_k x_k^2 + \left( \sum_{i=1, i \neq k}^{n-1} a_i x_i + b \right) x_k+ \xi_k(x_1, \dots, x_{k-1},\widehat{x}_k, x_{k+1}, \dots, x_{n-1})\] and \[h_k(x_1,...,x_{k-1},\widehat{x}_k,x_{k+1},...,x_n)= -\frac{a_kx^2_n}{2}+ \alpha_k(x_1,...,x_{k-1},\widehat{x}_k,x_{k+1},...,x_{n-1})\]
 \end{itemize}
            \begin{align*}
   &-\frac{\partial \alpha_k(x_1,...,x_{k-1},\widehat{x}_k,x_{k+1},...,x_{n-1})}{\partial x_j}
  -\frac{\partial \alpha_j(x_1,...,x_{j-1},\widehat{x}_j,x_{j+1},...,x_{n-1})}{\partial x_k}=  \\
&a_jx_k+\frac{\partial \xi_k(x_1, \dots, x_{k-1},\widehat{x}_k, x_{k+1}, \dots, x_{n-1})}{\partial x_j}+a_kx_j
+\frac{\partial \xi_j(x_1, \dots, x_{j-1},\widehat{x}_j, x_{j+1}, \dots, x_{n-1})}{\partial x_k}.
 \end{align*}
 This shows that  \begin{align*}-\frac{\partial \alpha_k(x_1,...,x_{k-1},\widehat{x}_k,x_{k+1},...,x_{n-1})}{\partial x_j}&=a_kx_j+\frac{\partial \xi_k(x_1, \dots, x_{k-1},\widehat{x}_k, x_{k+1}, \dots, x_{n-1})}{\partial x_j},
     \end{align*}
that is
     \begin{align*}\alpha_k(x_1,...,x_{k-1},\widehat{x}_k,x_{k+1},...,x_{n-1})&=-\frac{a_k}{2}x_j^2- \xi_k(x_1, \dots, x_{k-1},\widehat{x}_k, x_{k+1}, \dots, x_{n-1})+\zeta_{kj}(x_i) \,\text{with } i\neq k,j
     \end{align*}
where \(\zeta_{kj}\) is a function that depends on all variables except \(x_k\) and \(x_j\).
Thus we get
     \begin{align*}
         \alpha_k(x_1,...,x_{k-1},\widehat{x}_k,x_{k+1},...,x_{n-1}) &= -\frac{a_k}{2}x_j^2 - \xi_k(x_1, \dots,x_{k-1}, \widehat{x}_k,x_{k+1}, \dots, x_{n-1})
+ \zeta_{kj}(x_i)  \text{ with } i \neq k, j.     \end{align*}
For  $j = j_1$, we have
      \begin{align*}
    \alpha_k(x_1, \dots, \widehat{x}_k, \dots, x_{n-1}) 
    &= -\frac{a_k}{2} x_{j_1}^2 
    - \xi_k(x_1, \dots, x_{k-1}, \widehat{x}_k, x_{k+1}, \dots, x_{n-1}) \\
    &\quad + \zeta_{k j_1}(x_i), \quad i \neq k, j_1.
\end{align*}

For $j = j_2$, we have
\begin{align*}
&: \\
\alpha_k(x_1,...,\widehat{x}_k,...,x_{n-1}) &= -\frac{a_k}{2}x_{j_2}^2 - \xi_k(x_1, \dots,x_{k-1}, \widehat{x}_k,x_{k+1}, \dots, x_{n-1}) \\
&+ \zeta_{kj_2}(x_i), \, i\neq k, j_2.
     \end{align*}
By equalization we get
     \begin{align*}
         &: \\
-\frac{a_k}{2}x_{j_1}^2 &- \xi_k(x_1, \dots,x_{k-1}, \widehat{x}_k,x_{k+1}, \dots, x_{n-1}) + \zeta_{kj_1}(x_i \neq k, j_1) = -\frac{a_k}{2}x_{j_2}^2 \\
&- \xi_k(x_1, \dots,x_{k-1}, \widehat{x}_k,x_{k+1}, \dots, x_{n-1})+ \zeta_{kj_2}(x_i),\,i \neq k, j_2) \\
\zeta_{kj_1}(x_i)_{i \neq k, j_1} - \zeta_{kj_2}(x_i)_{i \neq k, j_2} &= \frac{a_k}{2}x_{j_1}^2 - \frac{a_k}{2}x_{j_2}^2 \\
\zeta_{kj_1}(x_i)_{i\neq k, j_1} - \zeta_{kj_2}(x_i)_{ i \neq k, j_2} &= \frac{a_k}{2}(x_{j_1}^2 - x_{j_2}^2) + \beta_k( \{x_i\}_{i \neq k,j_1,j_2} )- \beta_k( \{x_i\}_{i \neq k,j_1,j_2} ).
     \end{align*}
The expression of $\zeta_{kj_1}$ is given by
     \begin{align*}
 &: \\
\zeta_{kj_1}( \{x_i\}_{i \neq k, j_1} ) &= -\frac{a_k}{2}x_{j_2}^2 + \beta_k( \{x_i\}_{i \neq k,j_1,j_2} ).
\end{align*}
And $\alpha_k$ becomes  
\begin{align*}&: \\
\alpha_k(x_1,...,x_{k-1},\widehat{x}_k,x_{k+1},...,x_{n-1}) &= -\frac{a_k}{2}x_{j_1}^2 - \xi_k(x_1,...,x_{k-1},\widehat{x}_k,x_{k+1},...,x_{n-1}) -\frac{a_k}{2}x_{j_2}^2 \\
&+ \beta_k( \{x_i\}_{i \neq k,j_1,j_2} ) 
\end{align*}
where \(\beta_{k}\) is a function that depends on all variables except \(x_k\),  \(x_{j_1}\) and \(x_{j_2}\). 
    This process allows us to obtain:\[\alpha_k(x_1,...,x_{k-1},\widehat{x}_k,x_{k+1},...,x_{n-1}) = -\sum_{j\in \lbrace 1,... n-1\rbrace\setminus\{k\}}\frac{a_k}{2}x_{j}^2 - \xi_k(x_1,...,x_{k-1},\widehat{x}_k,x_{k+1},...,x_{n-1})+c_k. \]
     So we have  \begin{align*}
        &X(x_1,\cdots,x_n)=\sum_{k=1}^{n-1}\left(\frac{a_k}{2} \left( x_k^2-\sum_{j\in\{1,...,n\}\setminus\{k\}}x_j^2\right) + \left( \sum_{i=1, i \neq k}^{n-1} a_i x_i + b \right) x_k+c_k \right)\partial x_k\\
        &+\left(\left( \sum_{k=1}^{n-1} a_k x_k + b\right)x_n\right)\partial x_n.
    \end{align*}
\end{proof}

 \begin{remark}
        The Remark \ref{R0} ensures the case of Ricci Soliton.
    \end{remark}
    
    \begin{corollary}\label{th2}
    Let \(X\in\mathfrak{X}(\mathbb{H}^n)\) be a non-constant vector field.  Then \((\mathbb{H}^n,ds^2,X, \lambda, \rho)\) is not a gradient Ricci soliton and \((\mathbb{H}^n,ds^2,X, \lambda)\) is not a gradient Ricci-Bourguignon soliton. 
      
\end{corollary}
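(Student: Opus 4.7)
The plan is to reduce both assertions to Theorem \ref{pp} by specializing the weight function to $G\equiv 1$, which is the case that recovers the ordinary gradient Ricci--Bourguignon equation $\operatorname{Ric}+\nabla^{2}F=(\lambda+\rho S)g$ from the $G$-version \eqref{eq4h} (since on $\mathbb{H}^{n}$ the identity $\tfrac12\mathcal{L}_{\nabla F}g=\nabla^{2}F$ holds). I would argue by contradiction and assume that $(\mathbb{H}^{n},ds^{2},\nabla F,\lambda,\rho)$ is a gradient Ricci--Bourguignon soliton with $X=\nabla F$ non-constant. Theorem \ref{pp} then immediately supplies the explicit form
\[
F(x_{1},\ldots,x_{n})=\frac{1}{x_{n}}\sum_{i=1}^{n-1}\Bigl(\tfrac{a}{2}x_{i}^{2}+b_{i}x_{i}\Bigr)+\tfrac{a}{2}x_{n}+\frac{c}{x_{n}}+e,
\]
together with the compatibility identity $G\cdot P=\bigl(\lambda+(n-1)(1-n\rho)\bigr)x_{n}$ on $\mathbb{H}^{n}$, where $P(x_{1},\ldots,x_{n})=\tfrac{a}{2}x_{n}^{2}+c+\sum_{k=1}^{n-1}\bigl(\tfrac{a}{2}x_{k}^{2}+b_{k}x_{k}\bigr)$.

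The main step is then a coefficient-matching argument. Substituting $G\equiv 1$ turns the compatibility identity into the polynomial equation $P(x_{1},\ldots,x_{n})=\bigl(\lambda+(n-1)(1-n\rho)\bigr)x_{n}$, which must hold on the open half-space $\mathbb{H}^{n}$ and is therefore an identity in $\mathbb{R}[x_{1},\ldots,x_{n}]$. Since the right-hand side is linear in $x_{n}$ and has no dependence on the remaining coordinates, reading off the coefficients of $x_{n}^{2}$, of each $x_{k}^{2}$ and $x_{k}$ for $k<n$, and of the constant term on the left forces $a=0$, $b_{k}=0$ for every $k$, $c=0$, and simultaneously $\lambda+(n-1)(1-n\rho)=0$. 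Plugging these back into the formula for $F$ yields $F\equiv e$, hence $X=\nabla F\equiv 0$, contradicting the non-constancy of $X$. This settles the Ricci--Bourguignon case.

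For the gradient Ricci soliton statement I would simply invoke Remark \ref{R0}: any gradient Ricci soliton $(\mathbb{H}^{n},ds^{2},\nabla F,\lambda)$ is also a gradient Ricci--Bourguignon soliton, for any $\rho$, after the scalar shift of $\lambda$ described there (this works precisely because $\operatorname{Ric}$ and $Sg$ on the hyperbolic space differ only by a constant multiple of $g$), so the argument of the previous paragraph applies verbatim. I do not expect a genuine obstacle beyond the coefficient comparison itself: once the explicit form of $F$ coming from Theorem \ref{pp} is in hand, the quadratic terms in $P$ cannot possibly cancel a purely linear-in-$x_{n}$ right-hand side, which immediately collapses all the free parameters.
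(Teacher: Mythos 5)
Your proof is correct and follows essentially the route the paper intends: the corollary is left as an immediate consequence of Theorem \ref{pp}, obtained by setting $G\equiv 1$ in the identity $G\cdot P=\bigl(\lambda+(n-1)(1-n\rho)\bigr)x_n$ and comparing polynomial coefficients to force $F$ constant, with Remark \ref{R0} handling the Ricci-soliton case. No discrepancy with the paper's (implicit) argument.
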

     \begin{corollary}\label{co1}
    Let \((\mathbb{H}^n,ds^2,X,\lambda,\rho)\) be a $G$-Ricci-Bourguignon soliton where \(X\) is non-constant. Then \(X\) is Killing iff \(\lambda=(1-n)(1-n\rho)\). Moreover we have  \begin{align*}
        &X(x_1,\cdots,x_n)=\sum_{k=1}^{n-1}\left(\frac{a_k}{2} \left( x_k^2-\sum_{j\in\{1,...,n\}\setminus\{k\}}x_j^2\right) + \left( \sum_{i=1, i \neq k}^{n-1} a_i x_i + b \right) x_k+c_k \right)\partial x_k\\
        &+\left(\left( \sum_{k=1}^{n-1} a_k x_k + b\right)x_n\right)\partial x_n,
    \end{align*} where \(a_k,b,c_k\) are real constants \(\forall \quad k\in\lbrace 1,..., n-1\rbrace\).
\end{corollary}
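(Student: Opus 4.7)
My plan is to derive this corollary as a direct consequence of Theorem \ref{th1} together with the curvature data for $\mathbb{H}^n$, and then quote Theorem \ref{th3} for the explicit expression.

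First I would establish the equivalence $X$ is Killing $\iff$ $\lambda = (1-n)(1-n\rho)$. For the forward direction, I would suppose $\mathcal{L}_X g = 0$. Then the defining equation \eqref{eq3h} of a $G$-Ricci-Bourguignon soliton collapses to $\operatorname{Ric} = (\lambda + \rho S)g$. Inserting the identities \eqref{Ric} and \eqref{scal} for $\mathbb{H}^n$, namely $\operatorname{Ric} = -(n-1)g$ and $S = -n(n-1)$, and comparing scalar multiples of $g$ yields $-(n-1) = \lambda - \rho n(n-1)$, i.e. $\lambda = (n-1)(n\rho - 1) = (1-n)(1-n\rho)$. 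For the converse, I would substitute this value of $\lambda$ into \eqref{eq3h}: the right-hand side becomes $(\lambda - \rho n(n-1))g = -(n-1)g = \operatorname{Ric}$, so $\frac{G}{2}\mathcal{L}_X g = 0$, and since $G \not\equiv 0$ by the definition of a $G$-soliton, we conclude $\mathcal{L}_X g = 0$.

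Next, once $X$ is known to be Killing, the Ricci-Bourguignon equation is automatically satisfied with $\lambda = (1-n)(1-n\rho)$ regardless of $G$, so $(\mathbb{H}^n, ds^2, X, \lambda, \rho)$ is in particular a Ricci-Bourguignon soliton in the sense of Theorem \ref{th3} (take $G \equiv 1$). Theorem \ref{th3} then gives exactly the stated explicit form of $X$ in terms of constants $a_k$, $b$, $c_k$. This handles the "moreover" assertion by direct quotation.

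Alternatively, one can read the conclusion straight off Theorem \ref{th1}. Imposing $\mathcal{L}_X g = 0$ on the general solution of Theorem \ref{th1} forces the $G$-dependent terms in the components $X_k$ and $X_n$ to be absorbed into the constraints, and the balance condition $\lambda + (n-1)(1-n\rho) = 0$ (equivalently $\lambda = (1-n)(1-n\rho)$) emerges from requiring the non-homogeneous part of the system $(L_8)$ in that proof to vanish. The remaining analysis on $f$ and the $h_k$ is then identical to the Killing case worked out inside the proof of Theorem \ref{th3}.

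The main obstacle is really just the careful bookkeeping of the sign/constant $\lambda + (n-1)(1-n\rho)$ that appears as the source term in the proof of Theorem \ref{th1}; the rest is a packaging step. Since Theorem \ref{th3} has already carried out the hard combinatorial integration giving the polynomial/linear structure of $X$, the corollary reduces to matching this source term to zero and quoting the earlier classification.
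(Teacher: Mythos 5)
Your proposal is correct and follows essentially the route the paper intends: the equivalence $\lambda=(1-n)(1-n\rho)$ comes from inserting $\operatorname{Ric}=-(n-1)g$ and $S=-n(n-1)$ into equation \eqref{eq3h} so that $\tfrac{G}{2}\mathcal{L}_Xg=(\lambda+(n-1)(1-n\rho))g$, and the explicit form of $X$ is then read off from Theorem \ref{th3}. The only minor caution is that the converse direction needs $G$ nowhere vanishing (not merely $G\not\equiv 0$) to conclude $\mathcal{L}_Xg=0$ everywhere, but this matches the paper's implicit reading of its own definition.
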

\begin{remark}
     Let \((\mathbb{H}^n,ds^2,X,\lambda,\rho)\) be a $G$-Ricci-Bourguignon soliton. If $n\geq 3$, then the components of $X$ are not harmonic.
\end{remark}

\end{document}